\documentclass[11pt]{article}
\usepackage{caption}
\usepackage[a4paper,margin=1in]{geometry}
\usepackage{amsmath,amssymb,amsthm}
\usepackage{mathtools}
\usepackage{tikz}
\usetikzlibrary{arrows.meta,calc,positioning}
\usepackage{authblk}
\usepackage{cite}

\newtheorem{theorem}{Theorem}
\newtheorem{problem}{Problem}
\newtheorem{definition}{Definition}
\newtheorem{claim}{Claim}

\newtheorem{fact}{Fact}

\newtheorem{lemma}{Lemma}

\newtheorem{proposition}{Proposition}

\title{Characterizing forbidden induced subgraphs that force top
vertices to be Gallai vertices}
\author{Yurui Tang \thanks{Email address: tyr2290@163.com }}
\affil{Department of Mathematics, East China Normal University, Shanghai, 200241, China}
\date{}

\begin{document}
\maketitle

\begin{abstract}
A vertex of a graph is  called a top vertex if it has maximum degree in the graph. A vertex of a graph is called a Gallai vertex if it belongs to every longest path of the graph.  Golan and Shan  proved that every top vertex of any connected induced-$2P_2$ free graph is a Gallai vertex. Long, Milans, and Munaro  subsequently showed that if every connected induced-$H$ free graph has a Gallai vertex, then $H$ must be a linear forest of order at most nine. They also proved that, for every linear forest $H$ of order  at most four, every top vertex of any connected induced-$H$ free graph is a Gallai vertex.

In this paper, we determine the graphs $H$ for which every top vertex of any connected induced-$H$ free graph is a Gallai vertex. Our result shows that this holds precisely when $H$ is a linear forest of order at most four or $H=P_3+2P_1$.
\end{abstract}

{\bf Keywords.} Gallai vertex; Top vertex; Longest path; Induced-$H$ free

{\bf Mathematics Subject Classification.} 05C38, 05C75
\vskip 8mm
\section{Introduction}
We consider finite simple graphs and use standard terminology and
notation from \cite{bondy}. Let $G$ be a graph with vertex set $V(G)$
and edge set $E(G)$. The order of $G$ is denoted by $|G|:=|V(G)|.$
For a vertex $v\in V(G)$, let $N_G(v)$ and $deg_G(v)$ denote the
neighbourhood and degree of $v$ in $G$, respectively. For $S\subseteq V(G)$, set $N_G(S)=\{u\in V(G)\setminus S:u~ \text{has a neighbour in}~ S\}$.
For a subgraph $H$ of $G$, set $N_G(H)=N_G(V(H))$.  The maximum
degree of $G$ is $\Delta(G):=\max\{deg_G(v):v\in V(G)\}.$
For a set $S\subseteq V(G)$, let $G[S]$ denote the subgraph of $G$
induced by $S$, and write
$G-S:=G[V(G)\setminus S].$  For graphs we will use equality up to isomorphism, so $G = H$ means that $G$ and $H$ are isomorphic. 

Denote $P_n$ and $K_n$ by the path and the complete graph of order
$n$, respectively, and $K_{m,n}$ for the complete bipartite graph
with parts of orders $m$ and $n$. For two vertex-disjoint graphs $G$
and $H$, their disjoint union is denoted by $G+H$. For a positive
integer $k$, the graph $kG$ denotes the disjoint union of $k$ copies
of $G$. A \emph{linear forest} is a graph each of whose
components is a path.
A longest path of a graph is a path with the maximum possible number
of vertices. 

A vertex of $G$ is called a \emph{Gallai vertex} if it
belongs to every longest path of $G$. Following \cite{HZ}, we call a vertex of degree
$\Delta(G)$ a \emph{top vertex}. For a fixed graph $H$, a
graph $G$ is called \emph{induced-$H$ free} if $G$ contains no
induced subgraph isomorphic to $H$.

Gallai \cite{Gallai} asked whether every connected
graph contains at least one Gallai vertex. This was answered in the negative \cite{W,WV,Z2}. 
After that, many researchers began to investigate which graphs contain a Gallai vertex. 
It is well known that every tree contains a Gallai vertex. Other such graph classes have also  been identified (see \cite{CL,CF,C,CEF,Gs,J,LV,LM,N,RS,SZ}).

In 2018, Golan and Shan \cite{GS} proved that every top vertex of a connected
induced-$2P_2$ free graph is a Gallai vertex.
Their result motivates
the following general problem.

\begin{problem}
For which  graph  $H$ is it true that every top vertex of any connected induced-$H$ free graph is a Gallai vertex?
\end{problem}

Recently, Long, Milans, and Munaro \cite{LMM} established strong necessary
conditions for the existence of Gallai vertices in hereditary graph
classes. In particular, they proved the following result.
\begin{theorem}[Long--Milans--Munaro~\cite{LMM}]\label{lmm1}
If every connected induced-$H$ free graph has a Gallai vertex, then $H$
must be a linear forest of order at most nine. 
\end{theorem}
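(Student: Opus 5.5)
The statement is an implication, so I will prove its contrapositive. Suppose $H$ is not a linear forest, or $H$ is a linear forest with $|H|\ge 10$. I will exhibit a connected induced-$H$ free graph with no Gallai vertex. The natural split is into three cases: (i) $H$ contains a cycle; (ii) $H$ is a forest with a vertex of degree at least three, so $H$ contains an induced claw $K_{1,3}$; (iii) $H$ is a linear forest of order at least ten. Induced-freeness is inherited by induced subgraphs of $H$. So in (i) it suffices to avoid all cycles of length at most $|H|$, and in (ii) it suffices to avoid induced claws. In each case I only need one counterexample family with the required structural property.

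For (i), I start from a known connected graph $G_0$ with no Gallai vertex (the Walther/Zamfirescu examples cited in the introduction). I then build counterexamples of arbitrarily large girth. The tool is a gluing/amplification step in the spirit of Zamfirescu: replace pieces of $G_0$ by long paths or by copies of a gadget, in such a way that every longest path in the new graph projects to a longest path of $G_0$. Then, for every vertex $v$, some longest path avoids $v$, and the girth exceeds $|H|$, so the graph is induced-$H$ free.

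For (ii), I would take line graphs, which are claw-free. A longest path in $L(G)$ corresponds to a longest trail in $G$ (suitably normalised so that it can be extended by the dangling edges at its ends). So the goal is a connected graph $G$ in which every edge is missed by some maximum trail. I would build $G$ from a Gallai-type symmetric configuration: three (or more) isomorphic blocks arranged cyclically, with the automorphism group acting transitively enough that each edge is avoided by some optimal trail. Checking the trail/path correspondence carefully is routine but fiddly.

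For (iii), the task is a connected graph with no Gallai vertex in which every induced linear forest has at most nine vertices. I would make a dense counterexample. Take a small number of ``branch'' pieces, each a clique or the join of a clique with a small independent set. Attach them to a central structure with the usual three-way symmetry, so that any longest path must skip one piece, and each vertex is skipped by some longest path. Dense pieces bound how many vertices an induced linear forest can take from each piece (a clique contributes at most two, as an edge). Summing over the pieces should give at most nine.

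I expect case (iii) to be the main obstacle, because the bound nine must come out exactly from the construction. The difficulty is to balance enough sparsity to destroy all Gallai vertices against enough density to kill long induced linear forests. I would first try three cliques joined through a small hub, and compute the largest induced linear forest by hand.
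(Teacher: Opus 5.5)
Your reduction to three cases (contain a cycle, contain an induced claw, linear forest of order at least ten) is sound. But note that the paper does not prove this statement at all: it is quoted from \cite{LMM} and used as a black box in the necessity half of Theorem~\ref{main}. So your proposal has to stand on its own, and as written it does not, because none of the three cases comes with an actual graph.

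\textbf{Case (iii) is missing, and it carries all the quantitative content.} You correctly reduce it to exhibiting a single connected graph without a Gallai vertex that has no induced linear forest on ten vertices. You never produce such a graph, so the number nine is not justified. The design you float is also risky. Every longest path meeting two branches must cross the hub, so hub vertices are exactly the ones that tend to become Gallai vertices; with a one-vertex hub, that vertex lies on every longest path. Avoiding this needs a carefully crossed attachment of the branches, which you have not designed. Until you write down a specific graph and verify both that it has no Gallai vertex and that every induced linear forest in it has order at most nine, case (iii) is open.

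\textbf{Case (ii) rests on a false correspondence, not a fiddly one.} A path $e_1e_2\cdots e_r$ in $L(G)$ need not be a trail, because consecutive edges may pivot about a common vertex. For example, the three edges of $K_{1,3}$ form a Hamilton path of $L(K_{1,3})=K_3$, while a longest trail of $K_{1,3}$ has only two edges. In fact, if $e_i$ and $e_{i+1}$ meet at $x$, any unused edge at $x$ can be inserted between them. So a longest path of $L(G)$ with at least two edges contains an edge $ab$ if and only if $a$ or $b$ is one of its meeting vertices. What you actually need is that for every edge $ab$, some longest path of $L(G)$ has no meeting vertex in $\{a,b\}$. That is a condition about trails together with the edges they dominate, not about maximum trails, and again you give no $G$ satisfying it.

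\textbf{Case (i) can be completed along your lines, but you state the wrong direction.} Saying that every longest path of the new graph projects to a longest path of $G_0$ is not enough; you need longest paths of $G_0$ to lift. Subdividing every edge of a connected graph $G_0$ with no Gallai vertex and minimum degree at least two (e.g.\ a hypotraceable graph) by the same number $k$ of new vertices does this:
\begin{itemize}
\item If a longest path of $G_0$ has $L$ edges, every path of the subdivision has order at most $(L+1)+kL+2k$.
\item This order is attained by lifting any longest path $P$ of $G_0$ and extending it at each end by the $k$ subdivision vertices of an edge not in $P$. The two ends of $P$ are nonadjacent because $G_0$ is connected and nontraceable, so the two extensions use different edges.
\item An original vertex $v$ is missed by such a lift whenever $v\notin V(P)$. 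A subdivision vertex on $ab$ is missed whenever $a\notin V(P)$, because the extensions use only edges with both ends on $P$.
\item The girth is $(k+1)$ times that of $G_0$, so it can be made arbitrarily large.
\end{itemize}
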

They also proved the following partial result of Problem 1.
\begin{theorem}[Long--Milans--Munaro~\cite{LMM}]\label{lmm2}
Let $H$ be a linear forest of order at most four. Then each top vertex of every connected induced-$H$ free graph is a Gallai vertex.
\end{theorem}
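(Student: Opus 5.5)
The statement is Theorem~\ref{lmm2}. We argue by contradiction. Suppose $G$ is connected and induced-$H$ free, where $H$ is a linear forest on at most four vertices. Let $v$ be a top vertex with $\deg_G(v)=\Delta$, and let $P=x_1x_2\cdots x_m$ be a longest path of $G$ that avoids $v$.

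First we reduce to a short list of graphs $H$. Induced-$H$ freeness passes to every $H'$ that contains $H$ as an induced subgraph. So it suffices to treat the maximal linear forests on four vertices: $P_4$, $P_3+P_1$, $2P_2$, $P_2+2P_1$ and $4P_1$. Every linear forest on at most four vertices is an induced subgraph of one of these. For example, $P_1+P_2$ sits inside $P_2+2P_1$, $3P_1$ sits inside $4P_1$, and $P_3$ sits inside $P_4$. The case $2P_2$ is the Golan--Shan theorem \cite{GS}, so we may quote it.

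Next we record standard structure around $P$.

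\begin{itemize}
\item By maximality of $P$, every neighbour of $x_1$ and of $x_m$ lies on $P$.
\item If $v$ were adjacent to $x_1$ or to $x_m$, we could extend $P$. Hence $v\notin N(x_1)\cup N(x_m)$.
\item If $v$ were adjacent to two consecutive vertices $x_i,x_{i+1}$, we could insert $v$ between them. So $N(v)\cap V(P)$ contains no two consecutive vertices.
\item Write $A=N(v)\setminus V(P)$. If $a\in A$ is adjacent to $x_{i\pm1}$ where $x_i\in N(v)$, we can reroute through $x_i v a$ and lengthen $P$.
\end{itemize}

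These observations supply many non-edges near the neighbours of $v$.

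The core of the proof is a degree comparison. We want a vertex whose degree exceeds $\Delta$, or else a path longer than $P$. Let $S=\{i : x_i\in N(v)\}$ and consider the successors $x_{i+1}$ for $i\in S$.

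\begin{itemize}
\item \emph{Successors pairwise adjacent.} Suppose two successors $x_{i+1},x_{j+1}$ with $i<j$ were adjacent. Then $x_1\cdots x_i\, v\, x_j x_{j-1}\cdots x_{i+1} x_{j+1}\cdots x_m$ would be a longer path. So the successors are pairwise non-adjacent, and none of them is adjacent to $v$.
\item \emph{Successors versus $A$.} In the same way, a successor adjacent to a vertex of $A$ gives a longer path. Also $A$ should contain few vertices that behave independently.
\end{itemize}

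With these facts the forbidden graphs appear directly. Three successors together with $v$ give an induced $4P_1$. For $P_2+2P_1$, we use an edge $vx_i$ together with two successors that lie far from $x_i$. For $P_3+P_1$ and $P_4$, we use $x_{i+1}x_i v$ or $x_{i+1}x_i v x_j$, extended by one further successor or an endpoint. Each of these pieces must be induced, which forces the index sets to be small. Once $N(v)\subseteq V(P)$ is essentially settled with $|N(v)|\le 2$ or $3$, we compare $\deg(v)=\Delta$ with the degree of an endpoint such as $x_1$. All neighbours of $x_1$ lie on $P$, and the usual P\'osa-type rotation argument then shows that $x_1$ together with its neighbours creates a cycle of length $m$ or a longer path that includes $v$.

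The main obstacle is the case in which every neighbour of $v$ lies off $P$ and $\Delta$ is small. Here $P$ and $v$ are joined only through a connecting path $Q$, and we need a contradiction from the maximality of $\Delta$ alone. In that case I would take a shortest path $Q$ from $v$ to $P$. For $P_4$-freeness, its length is at most $2$. We then show that $P$ can be rerouted to pass through $v$: the endpoint of $Q$ on $P$ splits $P$, and the longer half together with $Q$ and a detour through another neighbour of $v$ should beat $|P|$ whenever $\deg(v)\ge\deg(x_j)$ for the attachment vertex $x_j$. The inequality $\deg(x_j)\le\Delta$ together with the induced-freeness bounds should give this, but I expect this bookkeeping to be the delicate part for $P_3+P_1$.
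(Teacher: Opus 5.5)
The paper only quotes this result from \cite{LMM}, so there is no in-paper proof to compare against. Judged on its own, your proposal has a genuine gap. Your reduction to $P_4$, $P_3+P_1$, $2P_2$, $P_2+2P_1$, $4P_1$ is correct, as are the appeal to Golan--Shan for $2P_2$ and your local facts about $P$. After that, however, there is no argument. First, because you work with $N(v)\cap V(P)$, the case where $v$ has few or no neighbours on $P$ stays open: you call it the main obstacle and only sketch a rerouting that ``should'' work. The missing idea is to use $R=N_G(H)\cap V(P)$ instead, where $H$ is the component of $G-V(P)$ containing $v$. Then $R^+\cup\{x_1\}$ is still independent, $x_1$ and $x_m$ are anticomplete to $H$ and to each other, and $\deg(v)\le |H|-1+|R|$. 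The forbidden graph now controls $H$ at once: an edge of $H$ together with $x_1,x_m$ is an induced $P_2+2P_1$, a non-edge of $H$ together with $x_1,x_m$ is a $4P_1$, and an induced $P_3$ in $H$ together with $x_1$ is a $P_3+P_1$.

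Second, your endgame points the wrong way. You only know $\deg(x_1)\le\Delta$, which is an upper bound, whereas P\'osa-type rotation needs lower bounds on degrees. So comparing $\deg(v)$ with $\deg(x_1)$ cannot produce a cycle of length $m$. Freeness also does not make the index sets small; for $P_2+2P_1$ nothing bounds $|R|$. Two successors ``far from $x_i$'' need not be non-adjacent to $x_i$, and $x_{i+1}x_ivx_j$ need not be induced (e.g.\ $j=i+2$). What freeness actually gives is adjacency, and you must convert that into a vertex of $R$ whose degree exceeds $|H|-1+|R|$.

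\emph{$P_2+2P_1$:} here $H=\{v\}$. For $x_i\in R$, the edge $vx_i$ forces $x_i$ to miss at most one vertex of $(R^+\setminus\{x_{i+1}\})\cup\{x_1\}$, so $\deg(x_i)\ge|R|+1>\deg(v)$.

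\emph{$P_4$:} a connected $P_4$-free graph has disconnected complement. The side $B$ not containing $v$ lies in $N(v)\cap N(x_1)\subseteq R$. Hence every $r\in B$ satisfies $\deg(r)\ge|G|-|R|\ge m+|H|-|R|>|H|-1+|R|\ge\deg(v)$, using $2|R|\le m-1$.

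\emph{The other three cases:} $P_3+P_1$, $P_2+2P_1$ and $4P_1$ are induced subgraphs of $P_3+2P_1$, so they already follow from Proposition~\ref{P_3+2P_1}.
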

Our main result answers Problem 1.
\begin{theorem}\label{main}
Let $H$ be a graph. Then every top vertex of any connected induced-$H$ free graph is a Gallai vertex if and only if $H$ is a linear forest of order at most four or $H = P_3 + 2P_1$.
\end{theorem}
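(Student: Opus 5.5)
The proof splits into a sufficiency part and a necessity part.

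\emph{Sufficiency.} For linear forests of order at most four this is exactly Theorem~\ref{lmm2}, so only $H=P_3+2P_1$ needs work. Let $G$ be connected and induced-$(P_3+2P_1)$ free, let $v$ be a top vertex, and suppose some longest path $P=x_1x_2\cdots x_m$ avoids $v$.

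\begin{enumerate}
\item Since $G$ is connected and $P$ is longest, $v$ is not adjacent to $x_1$ or $x_m$, and $v$ has no two neighbours that are consecutive on $P$. Otherwise we could extend $P$ or insert $v$ into it.
\item More generally, I will use the standard rotation/insertion arguments. Let $S=N(v)\cap V(P)$. For $x_i\in S$, the successors $x_{i+1}$ form an independent set that is also independent from $v$. Similarly, if $x_i,x_j\in S$ with $i<j$, then $x_{i+1}x_{j+1}\notin E(G)$, since otherwise $x_1\cdots x_i v x_j x_{j-1}\cdots x_{i+1}x_{j+1}\cdots x_m$ is longer.
\item The induced-$(P_3+2P_1)$ freeness then forces strong structure. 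Take $x_i\in S$ and form a $P_3$ such as $v x_i x_{i-1}$ or $x_{i-1}x_i x_{i+1}$. Two further successors of neighbours of $v$ on $P$ are nonadjacent to each other and to $v$. To avoid an induced $P_3+2P_1$, these must be adjacent to part of the $P_3$. Each such adjacency in turn yields a longer path, as in the usual Gallai-type crossing arguments.
\item This should give $|S|\le 2$, roughly. Alternatively, neighbours of $v$ off $P$ can be handled by noting that they attach to $P$ only in restricted ways.
\item Then compare degrees. Some vertex on $P$, say an endpoint or a neighbour of $v$, must have degree larger than $\deg(v)$. The reason is that the vertices of $N(v)$ together with $v$'s successors on $P$ are all forced into its neighbourhood by the $P_3+2P_1$ freeness. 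This contradicts $v$ being a top vertex.
\end{enumerate}

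I expect step 5 to be the main obstacle. Counting that some vertex dominates $N(v)\cup\{v\}$ requires case analysis on whether $N(v)$ lies on $P$ or off $P$, and on the components of $G-V(P)$ containing $v$.

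\emph{Necessity.} Suppose every top vertex of every connected induced-$H$ free graph is Gallai. Then every such graph has a Gallai vertex, so by Theorem~\ref{lmm1} $H$ is a linear forest of order at most nine. It remains to exclude every linear forest of order at least five other than $P_3+2P_1$.

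By monotonicity it suffices to exhibit, for each minimal excluded linear forest $H'$, a connected graph that is induced-$H'$ free and contains a top vertex missing some longest path. Any larger $H$ containing $H'$ as an induced subgraph is then handled by the same graph. The linear forests of order five are
\[
P_5,\; P_4+P_1,\; P_3+P_2,\; P_3+2P_1,\; 2P_2+P_1,\; P_2+3P_1,\; 5P_1 .
\]
Those of order six that do not contain an excluded order-five forest, and so must be handled separately, are those containing $P_3+2P_1$ but none of the others. Here $P_3+3P_1$ is the key case, together with any needed order-six forests built from $P_3+2P_1$.

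So I will construct explicit small graphs: a graph $G_1$ for $5P_1$, $P_2+3P_1$, $2P_2+P_1$, and so on, and a graph $G_2$ for $P_3+3P_1$. A natural candidate is a clique-like core with a high-degree vertex $v$ hanging so that a longest path Hamiltonian on the rest avoids $v$. One example is $v$ adjacent to many pendant-like vertices, while a longer path runs through a dense part.

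For each construction I will check three things by hand: which induced linear forests occur, that $v$ has maximum degree, and that some longest path avoids $v$. Choosing few constructions that cover all cases simultaneously is fiddly but routine.
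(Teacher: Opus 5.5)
\textbf{Overall.} Your outer skeleton matches the paper. Sufficiency is Theorem~\ref{lmm2} plus a separate argument for $P_3+2P_1$. Necessity is Theorem~\ref{lmm1} plus order-five counterexamples and induced-subgraph monotonicity. However, both substantive parts are missing, and parts of your plan would fail.

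\textbf{Sufficiency.} Your steps 1--5 only look at $S=N(v)\cap V(P)$. What that can deliver is the paper's easy case. Take the induced path $x_{i+1}x_iv$ (not $vx_ix_{i-1}$ or $x_{i-1}x_ix_{i+1}$). The set $(S^+\setminus\{x_{i+1}\})\cup\{x_1\}$ is independent and anticomplete to $\{v,x_{i+1}\}$. So $x_i$ misses at most one of its $|S|$ vertices, and $\deg(x_i)\ge|S|+1>\deg(v)$ when $v$ is isolated in $G-V(P)$. These adjacencies to $x_i$ are counted, not turned into longer paths, and no bound $|S|\le2$ is needed or proved. The same count works whenever some vertex of $R=N(H)\cap V(P)$ is complete to the component $H$ of $G-V(P)$ containing $v$.

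The real difficulty is when $|H|\ge2$ and no vertex of $R$ is complete to $H$. Then $S$ may be empty, $\deg(v)\le|H|-1+|S|$, and your counting says nothing. ``Neighbours off $P$ attach in restricted ways'' does not replace what the paper does here:
\begin{enumerate}
\item $H$ is a clique, since an induced $P_3$ in $H$ together with $v_1,v_s$ gives $P_3+2P_1$.
\item Take the first attachment vertex $v_q$ and $x,y\in H$ with $v_q\leftrightarrow x$, $v_q\nleftrightarrow y$. The induced path $v_qxy$ with $v_1,v_s$ forces $v_q$ to be adjacent to an endpoint of $P$.
\item Rerouting and degree arguments upgrade this to both endpoints, and then to $R=\{v_q\}$.
\item Each side of $v_q$ on $P$ has at least $|H|$ vertices, otherwise it could be replaced by a Hamilton path of the clique $H$. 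Using this, one forces $\deg(v_q)\ge|H|+1$, while $\deg(v)\le|H|$.
\end{enumerate}
None of this appears in your plan.

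\textbf{Necessity.} The reduction is correct, but you give no counterexample. Your hope that a few graphs cover several forests at once also fails at a key point. By Proposition~\ref{P_2+3P_1}, every connected induced-$(P_2+3P_1)$ free graph with a non-Gallai top vertex is $K_{t,t+2}-E(F)$ for a very specific $F$. Its side $Y$ is independent of size $t+2\ge5$, so it contains an induced $5P_1$. Hence $P_2+3P_1$ and $5P_1$ need different graphs. The $P_2+3P_1$ example must be of this rigid bipartite type, e.g.\ $K_{3,5}$ minus the edges of a suitable $P_7$. Your ``pendant-like neighbours of $v$'' candidate fails: three such neighbours plus any edge of the dense part not meeting them already induce $P_2+3P_1$.

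Your order-six analysis is also wrong. $P_3+3P_1$ is not a new case: deleting an end of its $P_3$ leaves an induced $P_2+3P_1$, and it also contains $5P_1$. Checking the eleven linear forests of order six, then deleting path endpoints for larger orders, shows that every linear forest of order at least six contains one of $P_5$, $P_4+P_1$, $P_3+P_2$, $2P_2+P_1$, $P_2+3P_1$, $5P_1$. So only these six order-five counterexamples are needed, and your $G_2$ is redundant.
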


The remainder of the paper is organized as follows. In Section 2, we
introduce the notation and collect several basic properties of longest
paths. Section 3 is divided into several parts. We first show two Propositions. In Proposition~\ref{P_3+2P_1}, we prove that every top vertex of every connected induced-$(P_3+2P_1)$ free graph is a
Gallai vertex. In Proposition~\ref{P_2+3P_1}, we prove that every top vertex of every connected induced-$(P_2+3P_1)$ free graph is a
Gallai vertex except for a family of graphs.  Next, we construct counterexamples for the remaining linear forests of order five. In addition, we use a Lemma to handle all linear forests of order at least six and finish the proof of Theorem \ref{main}. 
\section{Preliminaries}
Let $G$ be a graph.  	For two disjoint vertex sets $A,B\subseteq V(G)$, we say that $A$ is
\emph{complete} to $B$ if every vertex of $A$ is adjacent to every
vertex of $B$, and that $A$ is \emph{anticomplete} to $B$ if no edge
joins a vertex of $A$ to a vertex of $B$. A component consisting of a
single vertex is called a \emph{trivial component}.
For two distinct vertices $x$ and $y$ in $G$, an {\em $(x, y)$-path}  is a path whose endpoints are $x$ and $y$.  An {\em $x$-path} is a path with endpoint $x$.
For two sets $U,W\subseteq V(G)$, we write
$U\leftrightarrow W$ if $U$ is complete to $W$, and
$U\nleftrightarrow W$ if $U$ is anticomplete to $W$.
When $U=\{u\}$ and $W=\{w\}$, we simply write $u\leftrightarrow w$ and
$u\nleftrightarrow w$, respectively.
A set of vertices  is {\em independent} if no two of its 
elements are adjacent. We call $G$  \emph{traceable}(resp. \emph{hamiltonian}) if $G$ contains a Hamilton path(resp. cycle). Otherwise, $G$ is \emph{nontraceable}(resp. \emph{nonhamiltonian}).

Let $P=v_1v_2\dots v_l$ be a path in a graph $G$.
We set 
\begin{align*}
\text{
$v_i^+:=v_{i+1}$,~~ 
$v_j^-:=v_{j-1}$,~~  
$v_i^{+p}:=v_{i+p}$~~and~~
$v_j^{-q}:=v_{j-q}$,
}\end{align*}
respectively.  
For any vertex subset $S\subseteq V(P)$, 
we write 
\begin{align*}
S^{+p}:=\{v_i^{+p} : v_i\in S\setminus \{v_{l-p+1},\ldots,v_l\}\}\text{~~and~~}
S^{-q}:=\{v_i^{-q} : v_i\in S\setminus \{v_1,\ldots,v_{q}\}\}.
\end{align*}  
Set $S^{+}=S^{+1}$ and $S^{-}=S^{-1}$ for short. 
If $a<b,$
we set 
\begin{align*}
v_a \overrightarrow{P} v_b:=v_a v_{a+1}\ldots  v_b \text{~~and~~} 
v_b \overleftarrow{P} v_a:=v_b v_{b-1}\ldots  v_a.
\end{align*}
\begin{lemma}[Moon--Moser~\cite{MM}]\label{MM}
 Let $G$ be a balanced bipartite graph with bipartition $(X,Y)$, where $|X|=|Y|=n\ge 2$. If $deg_G(x)+deg_G(y)> n$ for every nonadjacent pair $x\in X$ and $y\in Y$, then $G$ is hamiltonian.
\end{lemma}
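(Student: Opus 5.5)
We prove the lemma by the standard edge-closure argument, reducing it to a counting step on a Hamilton path. Suppose, for a contradiction, that $G$ satisfies the hypothesis but is nonhamiltonian. Among all nonhamiltonian bipartite graphs $G'$ with the same bipartition $(X,Y)$ and $E(G)\subseteq E(G')\subseteq\{xy:x\in X,y\in Y\}$, choose one with the maximum number of edges. Rename it $G$. Adding edges never decreases degrees, so the degree condition $deg_G(x)+deg_G(y)>n$ still holds for every nonadjacent pair $x\in X$, $y\in Y$.

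Since $n\ge 2$, the complete bipartite graph $K_{n,n}$ is hamiltonian, so $G\neq K_{n,n}$. Hence there are nonadjacent vertices $x\in X$ and $y\in Y$. By maximality, $G+xy$ is hamiltonian. Every Hamilton cycle of $G+xy$ must use the edge $xy$, since $G$ itself is nonhamiltonian. Deleting this edge gives a Hamilton $(x,y)$-path $P=u_1u_2\dots u_{2n}$ in $G$ with $u_1=x$ and $u_{2n}=y$. Because $G$ is bipartite, $P$ alternates sides: the odd-indexed vertices lie in $X$ and the even-indexed vertices lie in $Y$.

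Now we count. Define
\[
S=\{i\in\{1,\dots,n\}: x\leftrightarrow u_{2i}\}\quad\text{and}\quad T=\{i\in\{1,\dots,n\}: y\leftrightarrow u_{2i-1}\}.
\]
We have $|S|=deg_G(x)$, because every neighbour of $x$ lies in $Y$ and therefore has an even index on $P$. Similarly $|T|=deg_G(y)$. Moreover $1\notin T$, since $u_1=x$ and $x\nleftrightarrow y$; so $T\subseteq\{2,\dots,n\}$.

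The key step is to show that $S\cap T=\emptyset$. Suppose instead that $i\in S\cap T$. Then $i\ge 2$, and
\[
u_1\overrightarrow{P}u_{2i-1}\,u_{2n}\overleftarrow{P}u_{2i}\,u_1
\]
is a Hamilton cycle of $G$, using the edges $u_{2i-1}y$ and $u_{2i}x$. This contradicts the assumption that $G$ is nonhamiltonian.

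Therefore $S$ and $T$ are disjoint subsets of $\{1,\dots,n\}$, and so
\[
deg_G(x)+deg_G(y)=|S|+|T|\le n.
\]
This contradicts the hypothesis, since $x$ and $y$ are nonadjacent. Hence $G$ is hamiltonian.

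The only delicate point is the index bookkeeping in the counting step. One must pair each $Y$-neighbour $u_{2i}$ of $x$ with the $X$-vertex $u_{2i-1}$ immediately before it, and must use $1\notin T$, to obtain exactly the bound $n$ rather than $n+1$.
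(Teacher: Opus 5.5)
Your proof is correct. The paper gives no proof of this lemma; it cites Moon and Moser and uses the result once, to obtain a Hamilton cycle in the auxiliary graph $H^*$ in the proof of Proposition~\ref{P_2+3P_1}. So there is no argument in the paper to compare against. What you give is the standard closure argument, the bipartite analogue of the Ore/Bondy--Chv\'atal proof, and it makes the citation self-contained.

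One small correction to your closing remark. The bound $|S|+|T|\le n$ already follows from $S\cap T=\emptyset$ inside $\{1,\dots,n\}$, so $1\notin T$ plays no role in the count. Its role, together with $n\notin S$ (which holds because $u_{2n}=y\nleftrightarrow x$), is to force $2\le i\le n-1$. That guarantees the rotated cycle $u_1\overrightarrow{P}u_{2i-1}u_{2n}\overleftarrow{P}u_{2i}u_1$ is a genuine Hamilton cycle of $G$ that avoids $xy$. Both exclusions are in fact automatic, since membership in $S$ or $T$ requires an edge of $G$ and $xy\notin E(G)$.
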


\begin{lemma}[Golan--Shan~\cite{GS}]\label{pre}
Let $G$ be a connected nontraceable graph.
Let $P=v_1v_2\cdots v_\ell$ be a longest path  of $G$. Let $H$ be a  component of $G-V(P)$. Set $R=N_G(H)\cap V(P)$.
Then the following statements hold.
\begin{enumerate}
\item[\rm(1)] $G[V(P)]$ is nonhamiltonian. In particular, if $v_1\leftrightarrow v_i$, then  $v_\ell\nleftrightarrow v_{i-1}.$
\item[\rm(2)] $v_1,v_\ell \notin R.$
\item[\rm(3)] If $v_i\in R$, then $v_{i-1},v_{i+1}\notin R$.
\item[\rm(4)] If $v_i\in R$, then $v_1\nleftrightarrow v_{i+1}$ and $v_\ell\nleftrightarrow v_{i-1}$.
\item[\rm(5)] If $v_i,v_j\in R$, where $i\ne j$, then
$v_{i-1}\nleftrightarrow v_{j-1}$ and
$v_{i+1}\nleftrightarrow v_{j+1}$.
\end{enumerate}
\end{lemma}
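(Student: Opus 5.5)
The statement to prove is Lemma~\ref{pre}. I will argue every item by contradiction: in each case I either build a path longer than $P$, or a cycle through all of $V(P)$. Such a cycle gives a longer path because $G$ is connected and nontraceable. Indeed, if $G[V(P)]$ had a Hamilton cycle $C$, then $V(P)\neq V(G)$, since otherwise $G$ would be traceable. By connectivity some vertex $w\notin V(P)$ is adjacent to some $v_i$. Starting at $w$, then going to $v_i$ and around $C$, yields a path on $\ell+1$ vertices, a contradiction. This proves the first half of (1). For the "in particular" part: if $v_1v_i\in E(G)$ and $v_\ell v_{i-1}\in E(G)$, then $v_1\overrightarrow{P}v_{i-1}\,v_\ell\overleftarrow{P}v_i\,v_1$ is a Hamilton cycle of $G[V(P)]$, which is impossible.

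For (2), suppose $v_1\in R$, so $v_1$ has a neighbour $h\in V(H)$. Then $h\,v_1\overrightarrow{P}v_\ell$ is longer than $P$. The case $v_\ell\in R$ is symmetric. For (3), suppose $v_i$ and $v_{i+1}$ both lie in $R$, with neighbours $h,h'\in V(H)$. Since $H$ is connected, there is an $(h,h')$-path $Q$ inside $H$, possibly with $h=h'$. Inserting $Q$ between $v_i$ and $v_{i+1}$ gives $v_1\overrightarrow{P}v_i\,Q\,v_{i+1}\overrightarrow{P}v_\ell$, a longer path.

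For (4), let $h\in N(v_i)\cap V(H)$. By (2) we have $1<i<\ell$. If $v_1v_{i+1}\in E(G)$, then $h\,v_i\overleftarrow{P}v_1\,v_{i+1}\overrightarrow{P}v_\ell$ is a longer path. Symmetrically, if $v_\ell v_{i-1}\in E(G)$, then $v_1\overrightarrow{P}v_{i-1}\,v_\ell\overleftarrow{P}v_i\,h$ is a longer path.

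For (5), assume $i<j$. By (2) and (3), $v_{i-1}$ and $v_{j+1}$ exist, and $v_{i+1}\neq v_j$. Let $Q$ be a path in $H$ from a neighbour of $v_i$ to a neighbour of $v_j$. If $v_{i-1}v_{j-1}\in E(G)$, the path
\[
v_1\overrightarrow{P}v_{i-1}\,v_{j-1}\overleftarrow{P}v_i\,Q\,v_j\overrightarrow{P}v_\ell
\]
uses every vertex of $P$ together with at least one vertex of $H$, so it is longer than $P$. If $v_{i+1}v_{j+1}\in E(G)$, the path
\[
v_1\overrightarrow{P}v_i\,Q\,v_j\overleftarrow{P}v_{i+1}\,v_{j+1}\overrightarrow{P}v_\ell
\]
is likewise longer than $P$.

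None of these steps is a real obstacle; the proof is routine path surgery. The points needing care are the boundary situations. In (5) with $j=i+2$, we have $v_{j-1}=v_{i+1}$, and the reversed segment $v_{j-1}\overleftarrow{P}v_i$ is just $v_{i+1}v_i$, so the construction is still valid. In (1) and (4), the indices $i-1$ and $i+1$ must exist, which (2) guarantees in (4).
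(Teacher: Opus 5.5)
Your proof is correct and takes essentially the same approach as the paper. For (1) the paper likewise turns a Hamilton cycle of $G[V(P)]$ into a longer path by attaching an outside neighbour (it takes one adjacent to a vertex of $R$); for (2)--(5) the paper omits the proofs and cites Golan--Shan, and your rerouting constructions are the standard ones and hold up, including the boundary indices.
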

\begin{proof}[Proof of Lemma \ref{pre}]
We first prove (1). Suppose to the contrary that $G[V(P)]$ is hamiltonian. Choose a vertex $v_i\in R$ with $v_i\leftrightarrow u$, where $u\in V(H)$. Since $G[V(P)]$ is hamiltonian, let $P'$ be a $v_i$-path that contains all vertices of $V(P)$. Hence, $uv_iP'$ is a longer path than $P$, which contradicts the maximality of $P$.

The proofs of (2)-(5) are given in \cite{GS} and are omitted here.
\end{proof}

\section{Proofs}
\begin{proposition}\label{P_3+2P_1}
Every top vertex of every connected induced-$(P_3+2P_1)$ free graph is a
Gallai vertex.
\end{proposition}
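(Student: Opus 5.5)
Suppose, towards a contradiction, that $G$ is a connected induced-$(P_3+2P_1)$ free graph with a top vertex $x$ and a longest path $P=v_1v_2\cdots v_\ell$ avoiding $x$. Then $G$ is nontraceable, so Lemma~\ref{pre} applies to the component $H$ of $G-V(P)$ containing $x$. Let $R=N_G(H)\cap V(P)$; it is nonempty because $G$ is connected. The plan is to use the structure from Lemma~\ref{pre} to produce many pairwise nonadjacent vertices. Any induced $P_3$ together with two vertices anticomplete to it and to each other is forbidden, so this should force $\deg_G(x)<\Delta(G)$, or some other contradiction.

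\textbf{Step 1: independent sets.} Fix $v_i\in R$. By Lemma~\ref{pre}(2),(4) and maximality of $P$, the set $\{v_1,v_\ell,u\}$ with $u\in V(H)$ is independent. Similarly, by Lemma~\ref{pre}(5), $R^+\cup\{v_1\}\cup\{u\}$ and $R^-\cup\{v_\ell\}\cup\{u\}$ are independent. Moreover $v_1,v_\ell$ have no neighbours off $P$, since otherwise $P$ extends. So every vertex of $H$ and both endpoints lie in large independent sets.

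\textbf{Step 2: $H$ is small.} If $H$ contains an induced $P_3$, take it together with $v_1$ and $v_\ell$. These are nonadjacent to $H$ by Lemma~\ref{pre}(2), and nonadjacent to each other by Lemma~\ref{pre}(1) (otherwise $G[V(P)]$ is Hamiltonian). This gives an induced $P_3+2P_1$. Hence $H$ is $P_3$-free, i.e.\ a clique. Next consider $x$ with neighbours $v_i\in R$ and $w\in H$, or two path neighbours. If $x$ has a path neighbour $v_i$ and a nonadjacent neighbour $y$, then $y x v_i$ (or $v_{i-1}v_iv_{i+1}$-type triples) form an induced $P_3$. Pairing such a triple with $v_1,v_\ell$, or with $v_1$ and a vertex of $R^{+}$, should show that $N(x)$ is nearly a clique and that $|R|$ is small. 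In the extreme case, I expect $|R|=1$, or $R$ is as constrained as possible.

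\textbf{Step 3: degree comparison.} Let $v_i\in R$. I would compare $\deg(x)\le |H|-1+|N(x)\cap R|$ with $\deg(v_i)$. The vertex $v_i$ is adjacent to $v_{i-1},v_{i+1}$ and to its neighbours in $H$. The aim is to show, using the induced $P_3$ $v_{i-1}v_iu$ (note $v_{i-1}\nleftrightarrow u$) together with $v_1,v_\ell$ when they avoid $N(v_{i-1})\cup N(v_i)$, that $v_i$ is adjacent to all of $H$. Then $\deg(v_i)\ge |H|+2>\deg(x)$, contradicting that $x$ is a top vertex. When $v_1$ or $v_\ell$ is adjacent to $v_{i-1}$ or $v_i$, the endpoints must be replaced. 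I would use rotations of $P$ (if $v_1v_j\in E$, then $v_{j-1}\overleftarrow{P}v_1v_j\overrightarrow{P}v_\ell$ is another longest path with endpoint $v_{j-1}$) to find substitute independent endpoints.

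\textbf{Main obstacle.} The hard part is Step 3 when $|R|\ge 2$ and $x$ has several neighbours in $R$. There, $\deg(x)$ may exceed $|H|+1$, and one must find a vertex of strictly larger degree. I would first try to show that each $v_j\in R$ adjacent to $x$ is adjacent to all of $H$, and also that $v_j$ is adjacent to $v_{j\pm1}$. Using induced-$P_3$ arguments with $R^+$ as the independent pair, I would then bound $|N(x)\cap R|\le 2$. This final counting, together with the case analysis on which endpoints are adjacent to which $R$-neighbours, is where most of the work lies.
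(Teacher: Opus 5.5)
There is a genuine gap. Your Step 1 and the first half of Step 2 are correct and match the paper's opening: $R^+\cup\{v_1\}$ and $R^-\cup\{v_\ell\}$ are independent and anticomplete to $H$, and $H$ is a clique because an induced $P_3$ in $H$ together with $v_1,v_\ell$ would be a forbidden configuration. Beyond that, the degree comparison in Step 3 is too weak even in the easy case. Suppose some $v_i\in R$ is complete to $H$. Then $\deg(v_i)\ge |H|+2$ beats $\deg(x)\le |H|-1+|N(x)\cap R|$ only when $|N(x)\cap R|\le 2$, and the $P_3+2P_1$ condition gives you no way to force that bound. What works instead is to set the induced path $v_{i+1}v_ix$ against the independent set $(R^+\setminus\{v_{i+1}\})\cup\{v_1\}$. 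This set has exactly $|R|$ vertices and is anticomplete to $\{v_{i+1},x\}$, so $v_i$ misses at most one of them. Hence $\deg(v_i)\ge |R|+|H|>|H|-1+|R|\ge\deg(x)$, with no need to control $|N(x)\cap R|$.

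The more serious problem is the case where no vertex of $R$ is complete to $H$. Your plan tries to exclude this case locally, and it cannot. If $v_i\leftrightarrow w$ and $v_i\nleftrightarrow y$ with $w,y\in V(H)$, the induced path $v_iwy$ together with $v_1,v_\ell$ only yields $v_i\leftrightarrow v_1$ or $v_i\leftrightarrow v_\ell$. That outcome is fully compatible with Lemma~\ref{pre}. The triple $v_{i-1}v_iu$ you mention carries no information about edges from $v_i$ to the rest of $H$.

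This case is the bulk of the paper's proof, and it needs a global argument. Let $q=\min\{j:v_j\in R\}$. Maximality of $P$, together with the fact that the clique $H$ has a Hamilton path from any vertex, gives $|V(v_1\overrightarrow{P}v_{q-1})|\ge|H|$ and $|V(v_{q+1}\overrightarrow{P}v_\ell)|\ge|H|$. One then proves in turn:
\begin{itemize}
\item $v_q\leftrightarrow v_1$ and $v_q\leftrightarrow v_\ell$;
\item $|R|=1$, so $\deg(x)\le|H|$;
\item $v_q$ has at least $|H|+1$ neighbours.
\end{itemize}
Each step pairs a forbidden $P_3+2P_1$ with a rerouted path that splices in $H$ and is longer than $P$. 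Your remarks about rotations and a case analysis on endpoint adjacencies only point toward this, so the essential part of the proof is left undone.
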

\begin{proof}[Proof of Proposition~\ref{P_3+2P_1}]
If $G$ contains a Hamilton path, then every top vertex is a Gallai vertex.
Next we just consider a nontraceable graph $G$. Suppose to the contrary that $G$ has a top vertex $u$ that is not a
Gallai vertex. Then there exists a longest path of $G$ that does not
contain $u$. Let
$$
P=v_1v_2\cdots v_s
$$
be such a longest path. Let $H$ be the component of $G-V(P)$ containing
$u$, and set
$$
R=N_G(H)\cap V(P).
$$
Since $G$ is connected, we have
$R\ne\emptyset.$
We first record the following fact.
\begin{fact}\label{fact1}
The sets $R^+\cup\{v_1\}$ and $R^-\cup\{v_s\}$ are independent.
\end{fact}
\begin{proof}[Proof of Fact \ref{fact1}]
We just prove that $R^+\cup\{v_1\}$ is independent. The proof for $R^-\cup\{v_s\}$ is symmetric.

First, by Lemma \ref{pre}(2), $v_1\notin R$. If $v_i\in R$ and $v_1\leftrightarrow v_i^+$, then, since $v_i$ has a neighbour in $H$, say $w$, the path
$$
wv_i \overleftarrow{P} v_1 v_i^+ \overrightarrow{P} v_s
$$
is longer than $P$, a contradiction. Hence $v_1$ is nonadjacent to every vertex of $R^+$.
By Lemma \ref{pre}(5), $R^+$ is an independent set.
Thus $R^+\cup\{v_1\}$ is independent.
\end{proof}

We first determine the structure of $H$.
\begin{claim}\label{Hcomp}
The subgraph $H$ is complete.
\end{claim}

\begin{proof}[Proof of Claim \ref{Hcomp}]
Suppose not. Since $H$ is connected, $H$ contains an induced path $xyz$ of order $3$. By Lemma \ref{pre}(1) and (2), the vertices $v_1$ and $v_s$ are nonadjacent to every vertex of $H$, and $v_1\nleftrightarrow v_s$. Hence, the set
$$\{x,y,z\}\cup\{v_1,v_s\}$$ induces a copy $P_3+2P_1$, a contradiction. Therefore $H$ is complete.
\end{proof}

We distinguish two cases.

\medskip
\noindent\textbf{Case 1.} There exists a vertex $v_i\in R$ that is complete to $V(H)$.

By Lemma \ref{pre}(3), $v_{i+1}\notin R$. Then
$v_{i+1}v_i u$
is an induced path.
By Fact~\ref{fact1}, 
$R^+\cup\{v_1\}$ is an independent set.

We assert that $v_i$ is nonadjacent to at most one vertex of
$(R^+\setminus\{v_{i+1}\})\cup\{v_1\}$. Otherwise, choose two distinct vertices
$w_1,w_2\in (R^+\setminus\{v_{i+1}\})\cup\{v_1\}$
such that
$v_i\nleftrightarrow \{w_1,w_2\}$.
Note that
$u\nleftrightarrow \{w_1, w_2\}.$
Moreover, by Fact~\ref{fact1}, $w_1\nleftrightarrow w_2$ and $v_{i+1}\nleftrightarrow w_j$ for $j=1,2$. Hence
$$
\{v_{i+1},v_i,u\}\cup\{w_1,w_2\}
$$
induces a copy of $P_3+2P_1$, a contradiction.

Since $v_i$ is also adjacent to $v_{i+1}$ and to every vertex of $H$, we obtain
$$
\begin{aligned}
deg_G(v_i)
&\ge (\left|(R^+\setminus\{v_{i+1}\})\cup\{v_1\}\right|-1)
+1+|V(H)|        \\
&= |R|+|V(H)|.
\end{aligned}
$$
On the other hand, 
$deg_G(u)\le |V(H)|-1+|R|.$
Thus,
$deg_G(v_i)>deg_G(u)=\Delta(G),$
a contradiction.

\medskip
\noindent\textbf{Case 2.} No vertex of $R$ is complete to $V(H)$.
Set $t=|H|$. Then $t\ge 2$.
For every $v_i\in R$, there exists a vertex $y_i\in V(H)$ such that
$v_i\nleftrightarrow y_i.$
Let
$$
q=\min\{j:v_j\in R\}.
$$
Choose $x,y\in V(H)$ such that
$v_q\leftrightarrow x$ and $ v_q\nleftrightarrow y.$
Since $H$ is complete, $x\leftrightarrow y$, and hence
$v_qxy$
is an induced path of order $3$.

By Lemma \ref{pre}(1) and (2), both $v_1$ and $v_s$ are nonadjacent to every vertex of $H$, and $v_1\nleftrightarrow v_s$. Consider that the induced path
$v_qxy$ and the vertices $v_1$ and $v_s$. Since $G$ is induced-$(P_3+2P_1)$ free,  we obtain
$$
v_1\leftrightarrow v_q
\quad\text{or}\quad
v_s\leftrightarrow v_q.
$$

\begin{claim}\label{both}
$v_1\leftrightarrow v_q$ and $v_s\leftrightarrow v_q.$
\end{claim}
\begin{proof}[Proof of Claim \ref{both}]
Suppose to the contrary that exactly one of $v_1$ and $v_s$ is
adjacent to $v_q$.
We first show that $V(v_{q+1}\overrightarrow{P}v_{s-1})\subseteq N_G(v_s).$ 

Suppose $v_1\leftrightarrow v_q$ and  $v_s\nleftrightarrow v_q$. By Fact \ref{fact1}, $R^+\cup \{v_1\}$ is independent. Then $
v_1v_qv_{q+1}$ is an induced path. 
Consider the path $v_1v_qv_{q+1}$ and two vertices $y,v_s$.
Since $y\nleftrightarrow \{v_q, v_{q+1}\}$, the induced-$(P_3+2P_1)$ freeness of $G$ implies that 
$v_s\leftrightarrow v_{q+1}.$ Set
$$
h=\max\left\{i: V(v_{q+1}\overrightarrow{P} v_i)\subseteq N_G(v_s)\right\}.
$$ 
Then $h\ge q+1$.  It suffices to show that $h=s-1$. Suppose to the contrary that $h<s-1$. Then $v_h\leftrightarrow v_s$ and $v_{h+1}\nleftrightarrow v_s.$
Thus $v_sv_hv_{h+1}$ is an induced path.
We assert that $v_1\nleftrightarrow \{v_h,v_{h+1}\}$. Since $v_s\leftrightarrow v_h,$ by Lemma \ref{pre}(1), $v_1\nleftrightarrow v_{h+1}$.
If $h=q+1$, by Fact \ref{fact1}, $v_1\nleftrightarrow v_h$. If $h\ge q+2$, since $v_s\leftrightarrow v_{h-1},$ by Lemma \ref{pre}(1), $v_1\nleftrightarrow v_{h}$.
By Fact \ref{fact1}, $R^-\cup\{v_s\}$ is an independent set.
It follows that $y\nleftrightarrow \{v_h,v_{h+1}\}$.
Consequently, $$\{v_s,v_h,v_{h+1}\}\cup\{v_1,y\}$$ induces a copy of $P_3+2P_1$, a contradiction. Therefore
$h=s-1.$

Suppose  $v_1\nleftrightarrow v_q$ and $v_s\leftrightarrow v_q$. Set
$$
h'=\max\left\{i: V(v_{q}\overrightarrow{P} v_i)\subseteq N_G(v_s)\right\}.
$$ Thus $h'\ge q$. It suffices to show that $h'=s-1$.
By Lemma \ref{pre}(1), $v_1\nleftrightarrow V(v_{q+1}\overrightarrow{P} v_{h'+1}).$ 
Suppose to the contrary that $h'<s-1$. Then
$v_{h'}\leftrightarrow v_s$ and $v_{h'+1}\nleftrightarrow v_s,$
and hence $v_sv_{h'}v_{h'+1}$ is an induced path. If $h'\ge q+1$, by Fact \ref{fact1},  $y\nleftrightarrow \{v_{h'},v_{h'+1},v_s\}$ since  $v_s\leftrightarrow \{v_{h'-1},v_{h'}\}$. If $h'=q$, $y\nleftrightarrow \{v_{h'},v_{h'+1},v_s\}$.
Since $v_1\nleftrightarrow \{v_s,v_{h'},v_{h'+1}\}$, 
$$\{v_s,v_{h'},v_{h'+1}\}\cup\{v_1,y\}$$ induces a copy of $P_3+2P_1$, a contradiction. 
Therefore
$h'=s-1.$ 

Thus, $V(v_{q+1}\overrightarrow{P}v_{s-1})\subseteq N_G(v_s).$ By Fact \ref{fact1}, $R^-\cup\{v_s\}$ is independent, which implies  $R^- \cap V(v_{q+1}\overrightarrow{P}v_s)=\emptyset$. By the definition of $q$, combine with $v_{q+1}\notin R$, it follows that $R=\{v_q\}$. Since $u$ is a top vertex of $G$, we have that $u\leftrightarrow v_q$. Hence $deg_G(u)=t.$  By Claim \ref{Hcomp}, $H$ is a complete graph. Let $Q$ be a $u$-path in $H$ that contains all vertices of $H$. By the maximality of $P$, we have that $$|V(v_{1}\overrightarrow{P} v_{q-1})|\ge t \qquad\text{and}\qquad |V(v_{q+1}\overrightarrow{P} v_s)|\ge t.$$
By Lemma \ref{pre}(1), we have that $v_1\nleftrightarrow V(v_{q+1}\overrightarrow{P}v_s).$ 
Set
$$
m=\max\left\{j: V(v_{q+1}\overrightarrow{P} v_j)\subseteq N_G(v_q)\right\}.
$$
Since $deg_G(v_q)\le deg_G(u)=t$ and $|V(v_{q+1}\overrightarrow{P} v_s)|\ge t$, we have $m<s$. Hence
$v_q\nleftrightarrow v_{m+1}.$
Thus
$v_qv_mv_{m+1}$
is an induced path.

Recall that $|V(v_1\overrightarrow{P} v_{q-1})|\ge t$. Let
$z\in V(v_1\overrightarrow{P} v_t).$
We assert that $z\nleftrightarrow V(v_{q+1}\overrightarrow{P} v_s)$. Otherwise, suppose that
$z\leftrightarrow w$
for some $w\in V(v_{q+1}\overrightarrow{P} v_s)$. If $w\in V(v_{q+2}\overrightarrow{P} v_s)$, then		
$$
v_{q-1}\overleftarrow{P} z w\overrightarrow{P} v_s w^-\overleftarrow{P} v_q uQ
$$
is a path longer than $P$, a contradiction. If $w=v_{q+1}$, then 
$$v_{s}\overleftarrow{P}v_{q+1}z\overrightarrow{P}v_{q-1}v_quQ$$
is a path longer than $P$, a contradiction.

Now consider the induced path
$v_qv_mv_{m+1}$
and two vertices $z,y$. Recall that $y\nleftrightarrow \{v_q,v_m,v_{m+1}\}.$
Since $G$ is induced-$(P_3+2P_1)$ free, it follows that
$z\leftrightarrow v_q.$
Since $z$ was chosen arbitrarily, $v_q$ is adjacent to every vertex of $V(v_1\overrightarrow{P} v_t)$. Therefore
$$
deg_G(v_q)\ge |V(v_1\overrightarrow{P} v_t)|+|\{v_{q+1},u\}|=t+2>t=deg_G(u),
$$
which contradicts the assumption that $u$ is a top vertex.

This completes the proof of Claim \ref{both}.
\end{proof}

\begin{claim}\label{R=1}
$|R|=1$.
\end{claim}
\begin{proof}[Proof of Claim \ref{R=1}]
By Claim \ref{both} and the fact that $v_1\nleftrightarrow v_s$, $v_1v_qv_s$ is an induced path. 
If $|R|=1$, we are done, so assume that $|R|\ge 2$. Choose $v_p\in R\setminus\{v_q\}$.  By Lemma \ref{pre}(3), $y\nleftrightarrow \{v_{p-1}, v_{p+1}\}$. By Fact \ref{fact1}, $v_s\nleftrightarrow v_{p-1}$. Consider the path $v_1v_qv_s$ and two vertices $y,v_{p-1}$, by induced-$(P_3+2P_1)$ freeness,
it follows that $v_q\leftrightarrow v_{p-1}$ or $v_1\leftrightarrow v_{p-1}.$ 
Choose a vertex $w\in V(H)$ such that $w\leftrightarrow v_p$.

We assert that $v_q\leftrightarrow v_{p-1}$. Otherwise, suppose $v_q\nleftrightarrow v_{p-1}$. Then $v_1\leftrightarrow v_{p-1}.$ Now $$v_{q+1}\overrightarrow{P}v_{p-1}v_1\overrightarrow{P}v_qv_s\overleftarrow{P}v_pw$$
is a longer path than $P$, a contradiction.
Since $v_p$ was chosen arbitrarily, we have that $R^-\subseteq N_G(v_q)$. Let $$g=\min\{i:v_i\in R\setminus \{v_q\}\}.$$ Note that $v_sv_qv_{g-1}$ is an induced path of order three. Recall that $y\nleftrightarrow \{v_s,v_q,v_{g-1}\}$. 

By the maximality of $P$, $t\le q-1$.  We assert that $v_q\leftrightarrow V(v_1\overrightarrow{P}v_t).$ Otherwise, since $v_1\leftrightarrow v_q$, there exists an
integer $f$ with $2\le f\le t$ such that
$v_f\nleftrightarrow v_q$. Since $G$ is induced-$(P_3+2P_1)$ free, consider the induced path $v_sv_qv_{g-1}$ and vertex set $\{y,v_f\}$, we have that $v_f\leftrightarrow v_{g-1}$ or $v_f\leftrightarrow v_{s}$. 
If $v_f\leftrightarrow v_{g-1}$,  let $Q_1$ be a $w$-path in $H$ that contains all vertices of $H$.  Then $$v_{q-1}\overleftarrow{P}v_fv_{g-1}\overleftarrow{P} v_qv_s\overleftarrow{P}v_gw Q_1$$
is a longer path than $P$, a contradiction.
Thus, $v_f\leftrightarrow v_{s}$. Recall that $x\in V(H)$ and $x\leftrightarrow v_q$. Let $Q_2$ be a $x$-path in $H$ that contains all vertices of $H$.  Then
$$v_{q-1}\overleftarrow{P}v_fv_s\overleftarrow{P}v_qxQ_2$$
is a longer path than $P$, a contradiction.
Therefore, $v_q\leftrightarrow V(v_1\overrightarrow{P}v_t).$

Since $R^-\subseteq N_G(v_q)$ and $v_q\leftrightarrow V(v_1\overrightarrow{P}v_t),$ it follows that
$$deg_G(v_q)\ge |R^-|+|V(v_1\overrightarrow{P}v_{t-1})|+|\{v_{s}\}|=|R|+|H|>deg_G(u),$$
which contradicts that $u$ is top vertex of $G$.
This completes the proof of Claim \ref{R=1}.
\end{proof}

By Claim \ref{R=1}, since $u$ is a top vertex, $u\leftrightarrow v_q.$ Thus, there exists at least one vertex of $V(v_1\overrightarrow{P} v_{q-1})$ that is nonadjacent to $v_q$.  Set $$a=\min \{ i: V(v_{i+1}\overrightarrow{P}v_{q-1})\subseteq N_G(v_q)\}.$$ 
Then $v_a\nleftrightarrow v_q$. Let $r=t-(q-a-1).$	We assert that $V(v_{q+1}\overrightarrow{P}v_{q+r})\subseteq N_G(v_q).$ Otherwise, suppose  $v_b\in V(v_{q+1}\overrightarrow{P}v_{q+r}) $ such that $v_b\nleftrightarrow v_q$. Consider the induced path $v_quy$ and vertices $v_a, v_b$. Since $G$ is induced-$(P_3+2P_1)$ free, $v_a\leftrightarrow v_b$. Let $Q$ be a $u$-path in $H$ that contains all vertices of $H$. Then  we find a path
$$P'=v_{1}\overrightarrow{P}v_av_b\overrightarrow{P}v_sv_quQ.$$
Note that $|V(P)\setminus V(P')|=(q-a-1)+(b-q-1)=t-r+(b-q-1)\le t-1$. This contradicts the maximality of $P$.

Therefore, $$deg_G(v_q)\ge |V(v_{q+1}\overrightarrow{P}v_{q+r})|+|V(v_{a+1}\overrightarrow{P}v_{q-1})|+|\{u\}|=t+1>deg_G(u),$$
a contradiction.
This completes the proof of Proposition~\ref{P_3+2P_1}.
\end{proof}
Theorem \ref{lmm2} together with Proposition \ref{P_3+2P_1} prove the sufficient direction of Theorem \ref{main}. 
To prove the converse direction of Theorem~\ref{main}, we need to analyze the remaining linear forests. 
We first consider linear forest $P_2+3P_1$ and show Proposition \ref{P_2+3P_1}. 

\begin{definition}\label{def:G}
	For each integer $t\ge 3$, let $K=K_{t,t+2}$
	be a complete bipartite graph with bipartition $(X,Y)$, where	$|X|=t$ and $|Y|=t+2.$
	Choose a vertex $u\in Y$, and let $Y_0=Y\setminus\{u\}.$
	
	For $t=3$, let $\mathcal{F}_3$ be the family of all spanning
	subgraphs $F$ of $K[X,Y_0]$ such that $F=P_7.$
	
	For each integer $t\ge 4$, let $\mathcal{F}_t$ be the family of all
	spanning subgraphs $F$ of $K[X,Y_0]$ of the form
	$F=Q+C_1+\cdots+C_s,$
	where $s\ge 0$, the graphs $C_1,\dots,C_s$ are even cycles, and
	$Q$ is a path satisfying one of the following conditions:
	\begin{enumerate}
		\item[\rm(i)] $Q$ is nontrivial and both endpoints of $Q$ belong
		to $Y_0$;
		
		\item[\rm(ii)] $Q$ is trivial and consists of a single vertex of
		$Y_0$.
	\end{enumerate}
	Here the components
	$Q,C_1,\dots,C_s$ are pairwise vertex-disjoint.
	
	For every $F\in\mathcal{F}_t$, define $V(G_F)=X\cup Y$ and $E(G_F)=E(K_{t,t+2})\setminus E(F).$
	Set $\mathcal{G}_t=\{G_F:F\in\mathcal{F}_t\}$ and $\mathcal{G}=\bigcup_{t\ge 3}\mathcal{G}_t.$
\end{definition}
\begin{figure}[h]
	\centering
	\includegraphics[width=0.9\textwidth]{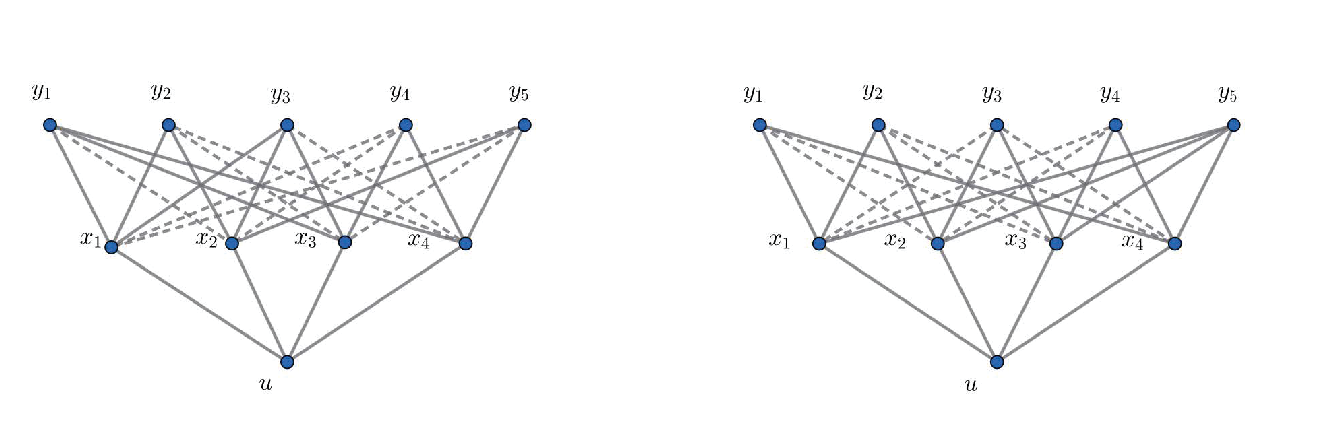}
	\caption{Two graphs in $\mathcal{G}_4$ (The left dashed line: $F=P_9$; the right dashed line: $F=C_8+P_1$). }
\end{figure}
We have the following proposition.

\begin{proposition}\label{P_2+3P_1}
A graph $G$ is a connected induced-$(P_2+3P_1)$ free graph
containing a top vertex that is not a Gallai vertex if and only if
$G\in\mathcal G$.
\end{proposition}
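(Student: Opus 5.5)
The proof splits into the two directions; the ``if'' direction is a verification and the ``only if'' direction carries the weight.

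\emph{If.} Take $G=G_F\in\mathcal{G}_t$. Connectivity is immediate, since $u$ is complete to $X$ and every vertex of $Y_0$ keeps at least $t-2\ge 1$ neighbours in $X$. For the degrees, $\deg(u)=t$ and $\deg(y)\le t$ for $y\in Y_0$. A vertex $x\in X$ lost at most $2$ edges, and lost exactly $2$ unless it is an endpoint of $Q$; but both endpoints of $Q$ lie in $Y_0$, so $\deg(x)=t+2-2=t$. Hence $\Delta(G)=t$ and $u$ is a top vertex. Since $G$ is bipartite with $|X|=t$, a longest path has at most $2t+1$ vertices. To show $u$ is not Gallai, I will find a path on $2t+1$ vertices inside $G-u$. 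The idea is to use Lemma~\ref{MM} on a balanced subgraph, or simply to construct the path from the complement structure: $G[X,Y_0]$ is the bipartite complement of a path-plus-even-cycles of maximum degree $2$, and it is dense. For $t=3$ I expect to verify by hand that the complement of $P_7$ in $K_{3,4}$ has a $P_7$. For induced-$(P_2+3P_1)$ freeness, an induced $3P_1$ that also misses an edge $ab$ must lie almost entirely inside one side. Both sides are independent with at most $t+2$ vertices. Every $x\in X$ has at most $2$ non-neighbours in $Y$, and every vertex of $Y_0$ has at most $2$ non-neighbours in $X$. Any edge $ab$ with $a\in X$, $b\in Y$ plus three vertices nonadjacent to both therefore forces three vertices of $Y$ among the non-neighbours of $a$, which is impossible, or a mixed configuration, which I will rule out by the same degree count.

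\emph{Only if.} Assume $G$ is connected and induced-$(P_2+3P_1)$ free, with a top vertex $u$ missed by a longest path $P=v_1\cdots v_s$. Let $H$ and $R$ be as in Proposition~\ref{P_3+2P_1}; Fact~\ref{fact1} and Lemma~\ref{pre} hold verbatim. I plan the following sequence.

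\begin{enumerate}
\item Show $H=\{u\}$. An edge of $H$ together with $v_1,v_s$ and a suitable third independent vertex, such as one of $R^+$, gives $P_2+3P_1$. A degree comparison then handles the residual case.
\item Show that $R^+\cup\{v_1\}$ is an independent set of size $|R|+1$, and likewise $R^-\cup\{v_s\}$. Combined with freeness applied to the edges $uv_i$, this forces every vertex of the path to be adjacent to most of these independent sets.
\item Prove that $P$ alternates between $R$ and non-$R$ vertices, so that $s=2|R|+1$ and $V(P)\setminus R$ is independent. This yields a bipartition $X=R$, $Y=(V(P)\setminus R)\cup\{u\}$ with $|Y|=|X|+2$. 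Then show $G$ has no other vertices, using connectivity plus the same forbidden configuration.
\item Use $\Delta=\deg(u)=|R|=t$ to deduce that each $x\in X$ misses exactly two vertices of $Y_0$ (or one, at an endpoint of $Q$) and that each $y\in Y_0$ misses at most two. The missing edges then form a graph $F$ of maximum degree $2$ on $X\cup Y_0$, so $F$ is a union of paths and even cycles.
\item Use the exact degree conditions and the count $|Y_0|=|X|+1$ to show that exactly one component is a path whose ends are in $Y_0$, with the trivial path allowed. Treat $t\le 3$ separately via longest-path maximality, obtaining exactly $F=P_7$ when $t=3$ and nothing for $t\le2$.
\end{enumerate}

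\emph{Main obstacle.} I expect the main obstacle to be step~3: establishing that $V(P)\setminus R$ is independent and that $P$ alternates. The approach I will try first is extremal, mirroring Proposition~\ref{P_3+2P_1}. Take a maximal run of consecutive non-$R$ vertices. Exhibit an induced $P_2$ formed by an edge inside that run. Then find three independent vertices from $\{u,v_1,v_s\}\cup R^{\pm}$ that avoid this edge, using the rotation arguments of Lemma~\ref{pre}(1),(4) to kill the possible adjacencies. Whenever a rotation instead yields a longer path, that is the contradiction.
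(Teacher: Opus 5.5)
Your skeleton ($H=\{u\}$, alternation along $P$, the bipartition $X=R$, $Y=Y_0\cup\{u\}$, and $F$ of maximum degree $2$) is the same as the paper's. However, two steps of the ``only if'' direction do not go through as planned.

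\emph{Step~1.} The triple $\{v_1,v_s,v_{i+1}\}$ with $v_i\in R$ need not be independent, because nothing excludes $v_s\leftrightarrow v_{i+1}$. What works is $v_1$ together with two vertices of $R^+$, and this only rules out $|R|\ge 2$.

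The residual case $R=\{v_k\}$, $|H|\ge 2$ is where the work is, and ``a degree comparison'' does not reach it. You only know $\deg(u)\le |H|$, and nothing established so far pushes any vertex above $|H|$. In fact, freeness applied to an edge $xy$ of $H$ with $\{v_1,v_s,v_{k-1}\}$, respectively $\{v_1,v_s,v_{k+1}\}$, forces $v_1\leftrightarrow v_{k-1}$ and $v_s\leftrightarrow v_{k+1}$. So your triple does not exist here.

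The paper needs a structural argument at this point:
\begin{itemize}
\item $v_1$ is complete to $V(v_2\overrightarrow{P}v_{k-1})$, and $v_s$ is complete to $V(v_{k+1}\overrightarrow{P}v_{s-1})$. Indeed, suppose $v_1$ is complete to $V(v_f\overrightarrow{P}v_{k-1})$ but $v_1\nleftrightarrow v_{f-1}$ with $f>2$. Freeness with the edge $v_{k+1}v_s$ and $\{v_{f-1},x,v_1\}$ makes $v_{f-1}$ adjacent to $v_{k+1}$ or $v_s$, and either edge rotates $P$ into a longer path.
\item By Lemma~\ref{pre}(1) and a further rotation, $v_{k-1}v_{k+1}$ is the only possible edge between the two segments.
\item Freeness then makes both segments and $H$ cliques. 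Maximality of $P$ gives $|S_1|,|S_2|\ge |H|-1$, where $S_1=V(v_1\overrightarrow{P}v_{k-2})$ and $S_2=V(v_{k+2}\overrightarrow{P}v_s)$.
\item Finally, $\deg(v_k)\le |H|$ yields two adjacent non-neighbours $x_1,x_2\in V(H)$ of $v_k$. For any non-neighbours $w_1\in S_1$, $w_2\in S_2$ of $v_k$, the set $\{x_1,x_2,v_k,w_1,w_2\}$ would induce $P_2+3P_1$. Hence $v_k$ is complete to $S_1$ or $S_2$, and $\deg(v_k)>|H|\ge \deg(u)$.
\end{itemize}

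\emph{Step~3.} Your extremal plan never uses that $u$ is a top vertex, and it cannot succeed without that. Take the path $v_1v_2v_3v_4v_5$ with the chord $v_1v_3$ and a pendant vertex $u$ attached to $v_3$. This graph is connected and induced-$(P_2+3P_1)$ free, $u$ lies on no longest path, and $H=\{u\}$, yet $P$ does not alternate.

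The missing idea is a degree-equality argument, which your Step~2 nearly contains. Once $H=\{u\}$ you have $\Delta(G)=\deg(u)=|R|$. For $v_j\in R$, consider the edge $uv_j$ and the independent set $R^+\cup\{v_1\}$, which has size $|R|+1$ and is anticomplete to $u$. Freeness shows $v_j$ misses at most two vertices of that set, so $\deg(v_j)\ge |R|$, and equality forces $N_G(v_j)\subseteq R^+\cup\{v_1,u\}$. Hence $v_{j-1}\in R^+\cup\{v_1\}$, i.e.\ $j=2$ or $v_{j-2}\in R$. Symmetrically, $j=s-1$ or $v_{j+2}\in R$. This gives $R=\{v_2,v_4,\dots,v_{s-1}\}$ and the independence of $X$ at once. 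Rotations plus connectivity then exclude other components of $G-V(P)$, as you intended.

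Two smaller points:
\begin{itemize}
\item In Step~4, no $x\in X$ can have $\deg_F(x)=1$, since $\deg_G(x)=t+2-\deg_F(x)\le t$. The parenthetical ``or one, at an endpoint of $Q$'' should be removed.
\item In the ``if'' direction, applying Lemma~\ref{MM} to $G-u$ with one vertex of $Y_0$ deleted only gives degree sums $\ge 2t-4$, which is not enough at $t=4$. The paper instead adds an auxiliary vertex $z$ joined to all of $Y_0$, which gives sums $\ge 2t-2>t+1$.
\end{itemize}
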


\begin{proof}[Proof of Proposition~\ref{P_2+3P_1}]
We first prove the sufficient condition. Let $G\in\mathcal G$.
Then $G=G_F$ for some $F\in\mathcal F_t$ and some integer $t\ge3$. Clearly, $G$ is connected.
Let $(X,Y)$ be the bipartition in Definition~\ref{def:G}, where
$Y_0=Y\setminus\{u\}$.  
By the definition of $\mathcal F_t$, we have
$\deg_F(x)=2$ for every $x\in X$ and
$\deg_F(y)\le2$ for every $y\in Y_0$. Since $u$ is complete to
$X$ in $G$, it follows that
$$
\deg_G(u)=t\quad \text{and}\quad
\deg_G(x)=t,\quad\text{for every }x\in X,
$$
and
$$
\deg_G(y)=t-\deg_F(y)\le t,
\quad\text{for every }y\in Y_0.
$$
Therefore, $\Delta(G)=t$, and hence $u$ is a top vertex of $G$.

We next show that $u$ is not a Gallai vertex. Since $G$ is bipartite
with partite sets $X$ and $Y$, where $|X|=t$ and $|Y|=t+2$, every
path of $G$ contains at most $t$ vertices of $X$ and at most $t+1$
vertices of $Y$. Thus, every path of $G$ has order at most $2t+1$.
It suffices to prove that $G-u$ contains a Hamilton path. First suppose that
$t=3$. Since $F=P_7$, it follows that there is a Hamilton path of $G-u$.
Now suppose that $t\ge4$. Let $H=G-u$, and construct a balanced
bipartite graph $H^*$ from $H$ by adding a new vertex $z$ to the
part containing $X$ and joining $z$ to every vertex of $Y_0$.
Thus, the two partite sets of $H^*$ are $X\cup\{z\}$ and $Y_0$,
both of order $t+1$.
For any two vertices $x\in X$ and $y\in Y_0$  that $x$ and $y$ are nonadjacent in $H^*$, we have that $\deg_{H^*}(x)=t-1$ and $\deg_{H^*}(y)=t+1-\deg_F(y)\ge t-1.$ Since $t>3$, $2t-2> t+1$.
By Lemma \ref{MM}, $H^*$ contains a Hamilton cycle.
Deleting $z$ from this cycle yields a Hamilton path of $H=G-u$.

It remains to prove that $G$ is induced-$(P_2+3P_1)$ free. Suppose
to the contrary that there exists an edge $xy$ and a set
$W=\{w_1,w_2,w_3\}$ such that they induce a copy of $P_2+3P_1$. We may assume
that $x\in X$ and $y\in Y$. Thus, $W$ is independent and
anticomplete to $\{x,y\}$.

If $y=u$, then $W\subseteq Y_0$, because $u$ is complete to $X$.
Hence $x$ is nonadjacent to three vertices of $Y_0$, a contradiction to
$\deg_F(x)=2$.
Suppose that $y\in Y_0$. Let $p=|W\cap X|$. If
$p=0$, then $\deg_F(x)\ge3$, while if $p=3$, then
$\deg_F(y)\ge3$. Both cases are impossible.
Suppose $p=1$. Let $x'$ be the unique vertex of $W\cap X$. 
$x'$ is nonadjacent to $y$ and the two vertices of $W\cap Y_0$. Hence $\deg_F(x')\ge3$, a contradiction. Thus, $p=2$. Let $y'$ be the
unique vertex of $W\cap Y_0$. $y'$ is nonadjacent to $x$
and the two vertices of $W\cap X$. Hence $\deg_F(y')\ge3$, again a
contradiction.
Therefore, $G$ is induced-$(P_2+3P_1)$ free. Thus, we prove the sufficient condition.

Now we suppose $G$ has a top vertex $u$ that is not a
Gallai vertex. It suffices to prove that  $G\in \mathcal{G}$. Note that $G$ is nontraceable. Then there exists a longest path of $G$ that does not
contain $u$. Let
$$
P=v_1v_2\cdots v_s
$$
be such a longest path. Let $H$ be the component of $G-V(P)$ containing
$u$, and set
$$
R=N_G(H)\cap V(P).
$$
Since $G$ is connected, we have
$R\ne\emptyset.$
Similar to the above proof, we have  the following Fact.
\begin{fact}\label{fact2}
The sets $R^+\cup\{v_1\}$ and $R^-\cup\{v_s\}$ are independent.
\end{fact}
Set
$
k=\min\{i:v_i\in R\},$ and
$\ell=\max\{i:v_i\in R\}.$ 
We now distinguish two cases according to the order of the component
$H$.

\medskip
\noindent\textbf{Case 1.} $|V(H)|\ge 2$.

We first show that $|R|=1$. Suppose to the contrary that $|R|\ge 2$. Choose an edge $xy\in E(H)$. Then the set
$$
\{x,y\}\cup\{v_1\}\cup R^+
$$
contains an induced copy of $P_2+3P_1$, since $R^+\cup\{v_1\}$ is independent and has at least three vertices. This is a contradiction. Therefore
$|R|=1.$ Let $R=\{v_k\}.$
Since $N_G(u)\subseteq V(H)\cup\{v_k\}$ and $deg_G(u)=\Delta(G)\ge 3$, we have
$|V(H)|\ge 3.$

Let $xy\in E(H)$. By the maximality of $P$, it follows that $k\ge 3$ and $k\le s-2$. Considering the edge $xy$ and the vertices $v_1,v_s,v_k^-$, by Fact \ref{fact2}, combine with the induced-$(P_2+3P_1)$ freeness of $G$, it follows that 
$v_1\leftrightarrow v_k^-$.
By symmetry,
$v_s\leftrightarrow v_k^+.$

\begin{claim}\label{v_1v_s}
\begin{enumerate}
\item [\rm{(1)}] $	v_1\leftrightarrow w,$ for every  $w\in V(v_2\overrightarrow{P}v_k^-).$ 
\item [\rm{(2)}] $v_s\leftrightarrow w'$, for every $w'\in V(v_k^+\overrightarrow{P}v_{s-1}).$
\end{enumerate}
\end{claim}
\begin{proof}[Proof of Claim \ref{v_1v_s}]
By symmetry, we just prove (1).

Suppose $f=\min\{ i:V(v_i\overrightarrow{P}v_{k-1})\subseteq N_G(v_1)\}$. If $f=2,$ we are done, so assume that $f>2$. Thus, $v_{f-1}\nleftrightarrow v_1$ and $v_f\leftrightarrow v_1$. 
Choose a vertex $x\in V(H)$ such that
$x\leftrightarrow v_k.$ Recall that $R=\{v_k\}$ and hence $x \nleftrightarrow \{v_1,v_k^+,v_{f-1},v_s\}$. 
Considering the edge $v_k^+v_s$ and the vertices $v_{f-1},x,v_1$, by Fact \ref{fact2},
it follows that 
either $v_{f-1}\leftrightarrow v_k^+$ or $v_{f-1}\leftrightarrow v_s$.

Indeed, if $v_{f-1}\leftrightarrow v_k^+$, then
$$
v_s\overleftarrow{P}v_k^+
v_{f-1}\overleftarrow{P}v_1
v_f\overrightarrow{P}v_kx
$$
is a path longer than $P$, a contradiction. If $v_{f-1}\leftrightarrow v_s$, then
$$
xv_k\overrightarrow{P}v_s
v_{f-1}\overleftarrow{P}v_1
v_f\overrightarrow{P}v_{k-1}
$$
is a path longer than $P$, a contradiction. Thus, (1) holds.

This completes the proof of Claim \ref{v_1v_s}.
\end{proof}

By Lemma \ref{pre}(1), $G[V(P)]$ contains no Hamilton cycle. 
By Claim \ref{v_1v_s}, $V(v_1\overrightarrow{P}v_{k-2})$ is anticomplete to $V(v_{k+2}\overrightarrow{P}v_s)$. We next assert that $v_{k-1}$ is anticomplete to $V(v_{k+2}\overrightarrow{P}v_s)$. Otherwise, let $y\in N_G(v_{k-1})\cap V(v_{k+2}\overrightarrow{P}v_s)$. Choose $x\in H$ with $x\leftrightarrow v_k$. We find a longer path 
$$v_1\overrightarrow{P}v_{k-1}y\overrightarrow{P}v_sy^-\overleftarrow{P}v_kx,$$
which contradicts the maximality of $P$. Similarly, $v_{k+1}$ is anticomplete to $V(v_1\overrightarrow{P}v_{k-2})$.
There exists at most one edge between
$V(v_1\overrightarrow{P}v_{k-1})$ and $V(v_{k+1}\overrightarrow{P}v_s)$.  In particular, if it exists, the edge is $v_{k-1}v_{k+1}$.

\begin{claim}\label{clique}
$V(v_1\overrightarrow{P}v_{k-1})$, $V(v_{k+1}\overrightarrow{P}v_s)$ and V(H)
are cliques.
\end{claim}
\begin{proof}[Proof of Claim \ref{clique}]

We first show that both $V(v_1\overrightarrow{P}v_{k-1})$ and $V(v_{k+1}\overrightarrow{P}v_s)$ are cliques.
Suppose to the contrary that there exist two nonadjacent vertices $w_1,w_2\in V(v_1\overrightarrow{P}v_{k-1}).$ Recall that there exists at most one edge $v_{k-1}v_{k+1}$ between
$V(v_1\overrightarrow{P}v_{k-1})$ and $V(v_{k+1}\overrightarrow{P}v_s)$. Now $v_s\nleftrightarrow \{w_1,w_2\}$.
Choose an edge $xy\in E(H)$. Since $w_1,w_2,v_s$ are pairwise nonadjacent to $x$ and $y$, the set
$$
\{x,y\}\cup\{w_1,w_2,v_s\}
$$
induces a copy of $P_2+3P_1$, a contradiction. Thus, $V(v_1\overrightarrow{P}v_{k-1})$ is a clique. Similarly, $V(v_{k+1}\overrightarrow{P}v_s)$ is a clique.

Next we show that $H$ is complete. Suppose not. Then there exist nonadjacent vertices
$x_1,x_2\in V(H).$
Since $v_1v_2\in E(G)$ and $v_s$ is nonadjacent to $H$, the set
$$
\{v_1,v_2\}\cup\{x_1,x_2,v_s\}
$$
induces a copy of $P_2+3P_1$, a contradiction. 

This completes the proof of Claim \ref{clique}.
\end{proof}
Since $deg_G(v_k)\le deg_G(u)$ and $u$ has at most $|V(H)|$ neighbours in $G$, we have
$deg_G(v_k)\le deg_G(u)\le |V(H)|.$
Thus,
$|N_G(v_k)\cap V(H)|\le |V(H)|-2.$
Hence there exist two vertices $x_1,x_2\in V(H)$ such that
$v_k\nleftrightarrow x_1$ and $v_k\nleftrightarrow x_2.$

Set
$$
S_1=V(v_1\overrightarrow{P}v_{k-2}),
\qquad
S_2=V(v_{k+2}\overrightarrow{P}v_s).
$$
By the maximality of $P$, $|S_1|\ge |H|-1$ and $|S_2|\ge |H|-1$.
We claim that either
$deg_{S_1}(v_k)=|S_1|$ or
$deg_{S_2}(v_k)=|S_2|.$
Otherwise, choose
$w_1\in S_1,$ and $w_2\in S_2$
such that
$v_k\nleftrightarrow w_1$ and $v_k\nleftrightarrow w_2.$
Then
$$
\{x_1,x_2\}\cup\{v_k,w_1,w_2\}
$$
induces a copy of $P_2+3P_1$, a contradiction.

Without loss of generality, assume that
$deg_{S_1}(v_k)=|S_1|.$
Then
$$
deg_G(v_k)\ge deg_{S_1}(v_k)+|\{v_{k-1},v_{k+1}\}|+deg_H(v_k)>|V(H)|\ge deg_G(u),
$$
which contradicts $deg_G(u)=\Delta(G)$. This finishes Case 1.

\medskip
\noindent\textbf{Case 2.} $|V(H)|=1$.

Now $V(H)=\{u\}$. Then $R=N_P(u)$ and $deg_G(u)=|R|=\Delta(G)\ge 3$.

We first show that $k=2$ and $\ell=s-1$.
Suppose to the contrary that $k\ge 3$. By Fact \ref{fact2}, $R^+\cup\{v_1\}$ is an independent set.
Consider the edge $v_1v_2$ and the vertex set $R^+\cup\{u\}$, since $G$ is induced-$(P_2+3P_1)$ free, $v_2$ is nonadjacent to at most two vertices of $R^+\cup\{u\}$. Note that $k\ge 3$. Hence, $v_3\notin R^+$. Since $v_2\leftrightarrow \{v_1,v_3\}$, it follows that
$$deg_G(v_2)\ge |R^+\cup\{u\}|-2+|\{v_1,v_3\}|>|R|=deg_G(u),$$
a contradiction. Therefore $k=2$. By symmetry, $\ell=s-1$.
\begin{claim}\label{onlyone}
$G-V(P)=H$. 
\end{claim}
\begin{proof}[Proof of Claim \ref{onlyone}]
To the contrary, suppose $H'$ is another component of $G-P$. If $|H'|\ge 2$, choose an edge $xy\in E(H')$. It is easy to check that $$\{x,y\}\cup \{u,v_1,v_s\}$$ induces a copy of $(P_2+3P_1)$, a contradiction. Thus, $|H'|=1$ and suppose $H'=\{u'\}$.

We assert that $u'$ is nonadjacent to any vertex of $R^+$. Otherwise, suppose $u'\leftrightarrow x$, where $x\in R^+$. By Lemma \ref{pre}(2), $x\ne v_s$.  Then $$v_1\overrightarrow{P}x^-uv_{s-1}\overleftarrow{P}xu'$$ is a longer path than $P$, a contradiction. By Fact~\ref{fact2}, $R^+\cup\{v_1\}$ is an independent set. Consider the edge $v_1v_2$ and the set $(R^+\setminus\{v_3\})\cup \{u'\}$. It follows that $v_2$ is nonadjacent to at most two vertices of $(R^+\setminus\{v_3\})\cup\{u'\}$. Thus,
$$deg_G(v_2)\ge |(R^+\setminus\{v_3\})\cup\{u'\}|-2+|\{v_1,v_3,u\}|=|R|+1>deg_G(u),$$
a contradiction. 
This completes the proof of Claim \ref{onlyone}.
\end{proof}  

Recall that $deg_G(u)=\Delta(G)\ge 3$. Hence, $R\setminus\{v_2,v_{s-1}\}\ne \emptyset.$ 
\begin{claim}\label{v_j}
For any vertex $v_j\in R\setminus\{v_2,v_{s-1}\}$, $v_j^+\nleftrightarrow v_j^-$. 
\end{claim}
\begin{proof}[Proof of Claim \ref{v_j}]
To the contrary, suppose $v_j^+\leftrightarrow v_j^-$. 
Consider the edge $v_j^+v_j^-$ and the vertices $v_1,v_s,u$. By Fact \ref{fact2}, $v_1\nleftrightarrow v_j^+$ and $v_s\nleftrightarrow v_j^-$. Since $G$ is induce-$(P_2+3P_1)$ free, either $v_1\leftrightarrow v_j^-$ or $v_s\leftrightarrow v_j^+$.

If $v_1\leftrightarrow v_j^-$, then $$v_1v_j^-\overleftarrow{P}v_2uv_j\overrightarrow{P}v_s$$ is a Hamilton path, a contradiction. Thus, $v_s\leftrightarrow v_j^+$. Then  $$v_sv_j^+\overrightarrow{P}v_{s-1}uv_j\overleftarrow{P} v_1$$
is a Hamilton path,
a contradiction.
\end{proof}

Now we distinguish two cases according to $\Delta(G)$.

{ \bf Subcase 2.1} $\Delta(G)=3$.

Since $deg_G(u)=3$, we may write $R=\{v_2,v_j,v_{s-1}\}$ where $v_j\in R\setminus\{v_2,v_{s-1}\}$.
Recall that $v_1\nleftrightarrow \{v_j^+,v_s\}$ and $v_s\nleftrightarrow \{v_j^-,v_s\}$. By Claim \ref{v_j}, $v_j^-\nleftrightarrow v_j^+$. We assert that 
$\{v_1,v_j^+,v_j^-,v_s\}$ is an independent set. It suffices to show that $v_1\nleftrightarrow v_j^-$ and $v_s\nleftrightarrow v_j^+$. If $v_1\leftrightarrow v_j^-$, then $$v_1v_{j-1}\overleftarrow{P}v_2uv_j\overrightarrow{P}v_s$$ is a Hamilton path of $G$, a contradiction. Thus, $v_1\nleftrightarrow v_j^-$.  Similarly, $v_s\nleftrightarrow v_j^+$. Hence, $\{v_1,v_j^+,v_j^-,v_s\}$ is an independent set. 
Since $deg_G(v_2)=3$, if $v_j^-\ne v_3$, then $$\{v_1,v_2\}\cup\{v_j^+,v_j^-,v_s\}$$ induces a copy of $P_2+3P_1$, a contradiction. Hence $v_j^-=v_3$.
Similarly, since $deg_G(v_{s-1})=3$, we get $v_j^+=v_{s-2}$.
Therefore $s=7$ and $n=8$. Therefore, $G[V(P)]=P_7$.

Set $X=\{v_2,v_4,v_6\},$ $Y_0=\{v_1,v_3,v_5,v_7\}$ and  $Y=Y_0\cup\{u\}.$
Let $F=v_5v_2v_7v_4v_1v_6v_3,$
and hence $F=P_7$. Therefore, $G=K_{3,5}-E(F)\in\mathcal G_3.$

{\bf Subcase 2.2} $\Delta(G)\ge 4$.

By Fact~\ref{fact2}, the set $R^+\cup\{v_1\}$ is independent. Consider the edge $v_1v_2$ and the independent set $R^+\setminus\{v_3\}$.
Since $G$ is induced-$(P_2+3P_1)$ free,  $v_2$ is nonadjacent to at most two vertices of $R^+\setminus\{v_3\}$. Hence
$$deg_G(v_2)\ge |R^+\setminus\{v_3\}|-2+|\{v_1,v_3,u\}|=|R|=deg_G(u).$$
Since $deg_G(u)=\Delta(G)$, the above inequality becomes equality. Hence, $v_2$ is exactly nonadjacent to two vertices of $R^+\setminus\{v_3\}$.  Therefore,
$$|R^+\setminus N_G(v_2)|=2\qquad \text{and}\qquad N_G(v_2)\subseteq R^+\cup\{v_1,u\}.$$
By symmetry, $deg_G(v_{s-1})=deg_G(u)=\Delta(G)$ and $N_G(v_{s-1})\subseteq R^-\cup\{v_s,u\}$ .

Now let $v_j\in R\setminus\{v_2,v_{s-1}\}$.
Consider the edge $v_jv_{j+1}$ and the independent set $(R^+\setminus\{v_{j+1}\})\cup\{v_1\}$.
Since G is induced-$(P_2+3P_1)$ free, $v_j$ is nonadjacent to at most two vertices of this set. Thus
$$
\Delta(G)
\ge deg_G(v_j)
\ge |\{v_{j+1},u\}|
+\bigl|(R^+\setminus\{v_{j+1}\})\cup\{v_1\}\bigr|-2
\ge |R|
=deg_G(u).
$$
Inequalities becomes equalities, and  hence 
$v_j$ is exactly nonadjacent to two vertices of $(R^+\setminus\{v_{j+1}\})\cup\{v_1\}$.
We obtain that
\begin{equation}\label{eqv_j}
N_G(v_j)\subseteq R^+\cup\{v_1,u\}.
\end{equation}
Thus, $v_{j-1}\in R^+$ and hence $v_{j-2}\in R$.
Since $v_j$ was chosen arbitrarily, we get $R^+\setminus\{v_s\}=R^-\setminus\{v_1\}$.
Let $t=|R|=\deg_G(u)=\Delta(G)$. 

Set $X=R$, $Y_0=R^+\cup\{v_1\}$, and $Y=Y_0\cup\{u\}$. Then
$|X|=t$, $|Y_0|=t+1$, and $|Y|=t+2$. By
Claim~\ref{onlyone}, we have $V(G)=X\cup Y$.

Relabel the vertices of $X$ and $Y_0$ according to their order
along $P$ so that
$$
X=\{x_1,\ldots,x_t\},\qquad
Y_0=\{y_1,\ldots,y_{t+1}\},
$$
and
$$
P=y_1x_1y_2x_2\cdots y_tx_ty_{t+1}.
$$
In particular, $y_ix_i,x_iy_{i+1}\in E(G)$ for every
$1\le i\le t$.

By Fact~\ref{fact2}, $Y_0$ is independent. Moreover,
$N_G(u)=R=X$, so $u$ is anticomplete to $Y_0$. Hence $Y$ is
independent. Together
with \eqref{eqv_j}, it follows that $N_G(x)\subseteq Y$ for every
$x\in X$. Thus, $X$ is also independent, and consequently $G$ is
bipartite with bipartition $(X,Y)$. Notice also that $u$ is complete
to $X$.

Let $K=K_{t,t+2}$ be the complete bipartite graph with bipartition
$(X,Y)$. Define a spanning subgraph $F$ of $K[X,Y_0]$ with $E(F)=E\bigl(K[X,Y_0]\bigr)\setminus E(G).$
Thus, for $x\in X$ and $y\in Y_0$, we have $xy\in E(F)$ if and
only if $xy\notin E(G)$. Since $u$ is complete to $X$, it follows
that $G=K-E(F).$

We next determine the structure of $F$. Let $x\in X$. Consider the
edge $ux$ and the independent set $Y_0$. Since $G$ is
induced-$(P_2+3P_1)$ free, $x$ is nonadjacent to at most two
vertices of $Y_0$. Hence $\deg_F(x)\le2$. On the other hand, $\deg_G(x)=1+|Y_0|-\deg_F(x)=t+2-\deg_F(x).$
Since $\deg_G(x)\le\Delta(G)=t$, we have $\deg_F(x)\ge2$.
Therefore,
$$
\deg_F(x)=2,\qquad\text{for every }x\in X.
$$

Now let $y\in Y_0$. Since $y$ lies on $P$, it has a neighbour
$x\in X$ on $P$. If $y$ were nonadjacent to three vertices of
$X$, then these three vertices, together with the edge $xy$, would
induce a copy of $P_2+3P_1$. Therefore,
$\deg_F(y)\le2$ for every $y\in Y_0$.

Counting the edges of $F$ from its two parts, we obtain
$$
\sum_{y\in Y_0}\deg_F(y)
=\sum_{x\in X}\deg_F(x)=2t.
$$
Since $|Y_0|=t+1$, it follows that
$$
\sum_{y\in Y_0}\bigl(2-\deg_F(y)\bigr)=2.
$$
As $0\le\deg_F(y)\le2$ for every $y\in Y_0$, exactly one of the
following two cases occurs.

First, there are exactly two vertices $y',y''\in Y_0$ of degree
one in $F$, and every other vertex of $Y_0$ has degree two. Since
every vertex of $X$ has degree two, the component of $F$ containing
$y'$ is a path with endvertices $y'$ and $y''$, while every other
component is a cycle.

Second, there is exactly one vertex $y'\in Y_0$ of degree zero in
$F$, and every other vertex of $Y_0$ has degree two. In this case,
$y'$ is an isolated vertex of $F$, while every other component of
$F$ is a cycle.

Since $F$ is bipartite, all its cycles are even. Thus, in either
case, $F=Q+C_1+\cdots+C_r,$
where $C_1,\ldots,C_r$ are pairwise vertex-disjoint even cycles and
$Q$ is a path whose endvertices belong to $Y_0$. In the second
case, $Q$ is the trivial path consisting of the single vertex $y'$.
Moreover, these components together contain all vertices of
$X\cup Y_0$.

Therefore $F\in\mathcal F_t$. Since $G=K_{t,t+2}-E(F)$, we conclude
that $G\in\mathcal G_t\subseteq\mathcal G.$

This completes the proof of Proposition~\ref{P_2+3P_1}.
\end{proof}
We next construct counterexamples for the linear forests
not covered by Propositions~\ref{P_3+2P_1} and
\ref{P_2+3P_1}. More precisely, for each of the remaining linear
forests $F$, we construct a connected induced-$F$ free graph
containing a top vertex that is not a Gallai vertex.

We now present four constructions about the remaining linear forests of order five. Construction~A provides
counterexamples for $P_3+P_2$, Construction~B deals with
$2P_2+P_1$, Construction~C  deals with $P_4+P_1$ and
$P_5$, and Construction~D provides counterexamples for
$5P_1$.

\medskip
\noindent
\textbf{Construction A.}
Let $t\ge8$. We first construct a base graph $G_0(t,L)$, where $L\subseteq \{0,1,\ldots,t\}$, $|L|\ge5$, and no two integers in $L$ are consecutive.
Let $X:=\{u_{2j}:1\le j\le t\}$ and $Y:=\{u_{2i+1}:0\le i\le t\}$. Define $Y_1:=\{u_{2i+1}:i\in L\}$ and $Y_2:=Y\setminus Y_1$.
The vertex set of $G_0(t,L)$ is $V(G_0)=X\cup Y$.
The edge set is defined as follows.
\begin{enumerate}
\item[\rm(a)] Add all edges between $Y_2$ and $X$.
\item[\rm(b)] For $u_{2i+1}\in Y_1$, add the following edges:
\begin{itemize}
\item[(b1)] if $1\le i\le t-1$, add
$u_{2i}u_{2i+1}$, $u_{2i+1}u_{2i+2}$, and $u_{2i}u_{2i+2}$;
\item[(b2)] if $i=0$, add the edge $u_1u_2$;
\item[(b3)] if $i=t$, add the edge $u_{2t+1}u_{2t}$.
\end{itemize}
\end{enumerate}

\begin{itemize}
\item Even order, $|V(G_1(t,L))|=2t+2$.
\end{itemize}
The graph $G_1(t,L)$ is obtained from $G_0(t,L)$ by adding a new
vertex $u$ and joining $u$ to every vertex of $X$. Thus,
$N_{G_1(t,L)}(u)=X$
and $deg_{G_1(t,L)}(u)=t.$
\begin{itemize}
\item Odd order, $|V(G_2(t,L))|=2t+3$.
\end{itemize}
The graph $G_2(t,L)$ is obtained from $G_1(t,L)$ by adding a new vertex $u'$ and joining $u'$ to every vertex of $X$.
Thus, $N_{G_2(t,L)}(u)=N_{G_2(t,L)}(u')=X$
and $deg_{G_2(t,L)}(u)=deg_{G_2(t,L)}(u')=t.$

\begin{figure}[h]
\centering
\includegraphics[width=1.0\textwidth]{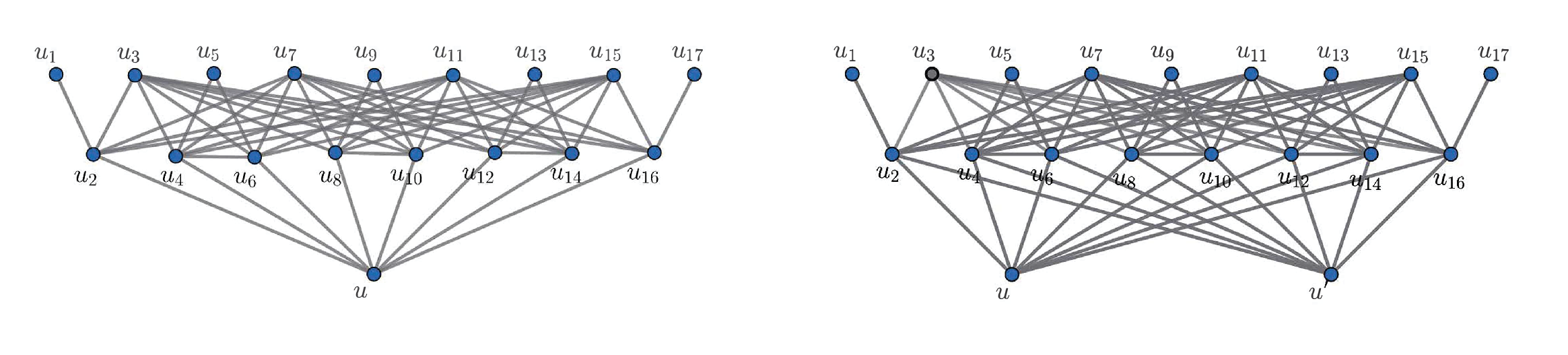}
\caption{The graphs $G_1(t,L)$ and $G_2(t,L)$ ($t=8$, $L=\{0,2,4,6,8\}$).}
\end{figure}

The required properties of Construction~A are summarized in the
following claim.
\begin{claim}\label{consA}
The graphs $G_1(t,L)$ and $G_2(t,L)$ are connected and
induced-$(P_3+P_2)$ free. Moreover, in each graph, the vertex $u$ is
a top vertex but not a Gallai vertex.
\end{claim}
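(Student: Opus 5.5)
The plan is to verify the four assertions of Claim~\ref{consA} directly from the explicit structure of $G_0(t,L)$. The key observation is that $Y\cup\{u\}$ (and $Y\cup\{u,u'\}$ in $G_2$) is an independent set. The only edges inside $X$ are the chords $u_{2i}u_{2i+2}$ with $i\in L$, $1\le i\le t-1$. Since $L$ has no two consecutive integers, these chords form a matching on $X$.

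\emph{Connectivity and the longest path.} Because $L\subseteq\{0,\dots,t\}$ has no consecutive integers, $|L|\le\lceil (t+1)/2\rceil<t+1$, so $Y_2\neq\emptyset$. Every vertex of $X$ is adjacent to every vertex of $Y_2$. Every vertex of $Y_1$ is adjacent to a vertex of $X$, and $u$ (and $u'$) is adjacent to all of $X$. Hence both graphs are connected. The sequence $u_1u_2\cdots u_{2t+1}$ is a path in $G_0$, since each $u_{2i+1}$ is adjacent to $u_{2i}$ and $u_{2i+2}$, whether it lies in $Y_1$ by (b) or in $Y_2$ by (a). Since all vertices outside $X$ form an independent set, every path alternates at most between $X$ and non-$X$ vertices, so it has at most $|X|+1=2t+1$ vertices. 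Therefore $u_1\cdots u_{2t+1}$ is a longest path of $G_1(t,L)$ and of $G_2(t,L)$, and it avoids $u$. So $u$ is not a Gallai vertex.

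\emph{Degrees.} We have $deg(u)=t$. A vertex of $Y_2$ has degree $t$, and a vertex of $Y_1$ has degree at most $2$. Consider $u_{2j}\in X$. It has $|Y_2|=t+1-|L|$ neighbours in $Y_2$ and one or two neighbours among $\{u,u'\}$. Its remaining neighbours come from indices $i\in\{j-1,j\}\cap L$. Each such $i$ contributes at most the $Y_1$-vertex $u_{2i+1}$ and one chord endpoint. Since $j-1$ and $j$ are consecutive, at most one of them lies in $L$. So $deg(u_{2j})\le t+1-|L|+2+2\le t$, using $|L|\ge5$. Thus $\Delta=t$ and $u$ is a top vertex.

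\emph{Induced-$(P_3+P_2)$ freeness.} This is the step I expect to need the most care. Suppose an induced $P_3+P_2$ exists. Every edge meets $X$, so the $P_3$ contains a vertex $a\in X$ and the $P_2$ contains a vertex $b\in X$, with $a\nleftrightarrow b$. Every vertex of $Z:=Y_2\cup\{u,u'\}$ is adjacent to both $a$ and $b$. Since the $P_3$ and the $P_2$ are anticomplete to each other, no vertex of $Z$ can be used. So the whole copy lies in $G[X\cup Y_1]$. I will check that $G[X\cup Y_1]$ is a disjoint union of cliques. Its components are the triangles $u_{2i}u_{2i+1}u_{2i+2}$ for $i\in L\cap[1,t-1]$, the edges $u_1u_2$ (if $0\in L$) and $u_{2t}u_{2t+1}$ (if $t\in L$), and isolated vertices of $X$. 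These pieces are vertex-disjoint precisely because $L$ contains no consecutive integers. A disjoint union of cliques contains no induced $P_3$, which gives the contradiction.
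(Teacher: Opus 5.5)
Your proposal is correct. For the induced-$(P_3+P_2)$ freeness, however, it takes a different route from the paper.

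The paper fixes an induced $P_3$, call it $Q$, in $G_1(t,L)$ and does a case analysis on its middle vertex: in $Y_1$, in $Y_2$, equal to $u$, or in $X$ with subcases on the endpoints. Each time it shows that the vertices anticomplete to $Q$ form an independent set. It then treats $G_2(t,L)$ separately. Since $u,u'$ are nonadjacent twins, a copy containing both must have them as the two ends of the $P_3$, and this forces the $P_2$ into the independent set $Y$.

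You instead argue globally. Because $V\setminus X$ is independent, each component of the copy contains a vertex of $X$. So no vertex of $Y_2\cup\{u,u'\}$, all of which are complete to $X$, can be used. The copy would then lie in $G[X\cup Y_1]$, which is a disjoint union of cliques and so has no induced $P_3$. Your version has three advantages:
\begin{enumerate}
\item[(1)] It handles $G_1$ and $G_2$ at once and avoids the case split.
\item[(2)] It shows exactly where the non-consecutiveness of $L$ enters, namely the vertex-disjointness of the triangles and end-edges forming $G[X\cup Y_1]$. In the paper this is buried in terse subcases, for example the middle vertex in $X$ with both ends in $Y$, which silently needs that $u_{2j}$ has at most one neighbour in $Y_1$.
\item[(3)] The same structure gives the degree bound $\deg(u_{2j})\le t+5-|L|\le t$. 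This is exactly where $|L|\ge 5$ is needed, and the paper asserts $\Delta=t$ without checking it.
\end{enumerate}
You also correctly omit the paper's toughness argument for nontraceability; it is unnecessary once the path-order bound and the path $u_1u_2\cdots u_{2t+1}$ avoiding $u$ are in hand.

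There is one slip to fix. A path has at most $|X|+1$ vertices outside $X$, since no two consecutive vertices lie outside $X$. Hence it has at most $2|X|+1=2t+1$ vertices, not ``$|X|+1=2t+1$'' as written.
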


\begin{proof}[Proof of Claim \ref{consA}]
By definition, $G_1(t,L)$ and $G_2(t,L)$ are connected. Note that, for $i=1,2$,
$\Delta(G_i(t,L))=t=deg_{G_i(t,L)}(u)$, and hence $u$ is a top vertex of $G_i(t,L)$.

We assert that $G_i(t,L)$ is nontraceable, for $i=1,2$. Let $\widetilde{G_i}(t,L)$ be a graph obtained from $G_i(t,L)$ by adding a new vertex $v$ joining all vertices to $G_i(t,L)$. Clearly, $\widetilde{G_i}(t,L)-X\cup\{v\}$  contains at least $|X|+2$ components. This implies that $\widetilde{G_i}(t,L)$ is not 1-tough. Hence, $\widetilde{G_i}(t,L)$ is nonhamiltonian and hence $G_i(t,L)$ is nontraceable.

The set $Y\cup\{u\}$ is independent and has size $t+2$, whereas
$|X|=t$. Thus, every path of $G_1(t,L)$ has order at most $2t+1$.
Since $P=u_1u_2\cdots u_{2t+1}$
is a longest path of order $2t+1$ avoiding $u$ in $G_1(t,L)$, $u$ is not a Gallai vertex of  $G_1(t,L)$. 	
The set $Y\cup\{u,u'\}$ is independent and has size $t+3$, whereas
$|X|=t$. Thus, every path of $G_2(t,L)$ has order at most $2t+1$. Since $P'=u_1u_2\cdots u_{2t+1}$
is a longest path of order $2t+1$ avoiding $u$ in $G_2(t,L)$, $u$ is not a Gallai vertex of $G_2(t,L)$.

We next show that $G_1(t,L)$ is induced-$(P_3+P_2)$ free. Let $Q$ be
an induced path of order three. It is enough to show that the vertices
anticomplete to $Q$ form an independent set. No vertex of $Y_1$ can
be the middle vertex of $Q$, since the two neighbours of such a vertex,
when both exist, are adjacent. If the middle vertex belongs to $Y_2$,
then it is complete to $X$, so every vertex anticomplete to $Q$
belongs to the independent set $Y_1\cup\{u\}$. If the middle vertex
is $u$, then every vertex anticomplete to $Q$ belongs to the
independent set $Y$. Finally, consider that the middle vertex belongs to $X$. If one endpoint of $Q$ is $u$, then the vertices anticomplete to
$Q$ belong to $Y$ and hence are independent. If one endpoint of $Q$ belongs to $X$, then the vertices anticomplete to
$Q$ belong to $Y_1$. Since no two
elements of $L$ are consecutive, the vertices anticomplete to
$Q$ are again independent. If both endpoints of $Q$ belong to $Y$, then the vertices anticomplete to
$Q$ belong to $Y$ and hence are independent. 
Therefore, no edge is anticomplete
to an induced $P_3$, and hence $G_1(t,L)$ is induced-$(P_3+P_2)$ free.

Now we show that $G_2(t,L)$ is induced-$(P_3+P_2)$ free.
By the symmetry of $u$ and $u'$, if  $G_2(t,L)$  contains a copy of  induced-$(P_3+P_2)$, then it contains both $u$ and $u'$. Since $u$ and $u'$ are nonadjacent, $u$ and $u'$ must be the two endpoints of the $P_3$, that is, the two vertices forming
the $P_2$ must belong to $Y$, which contradicts that $Y$ is
independent. Thus, $G_2(t,L)$ is induced-$(P_3+P_2)$ free.
\end{proof}

\medskip
\noindent
\textbf{Construction B.}
\begin{itemize}
\item Even order, $|V(G_3)|=2t+2$.
\end{itemize}
Fix integers $t\ge 4$ and $2\le s\le t-2$. Let
$X_1=\{a_1,a_2,\ldots,a_s\}$, $X_2=\{b_1,b_2,\ldots,b_s\}$,
$Y_1=\{a_{s+1},a_{s+2},\ldots,a_t\}$, and
$Y_2=\{b_{s+1},b_{s+2},\ldots,b_t\}$.
Set $V(G_3)=X_1\cup X_2\cup Y_1\cup Y_2\cup\{u,w\}$.
The edge set $E(G_3)=E_1\cup E_2\cup E_3\cup E_4$ is given by
$$
E_1=\{a_ib_j:a_i\in X_1,\ b_j\in X_2\},\qquad
E_2=\{a_ib_j:a_i\in Y_1,\ b_j\in Y_2\},
$$
$$
E_3=\{a_iw:a_i\in X_1\cup Y_1\},\qquad
E_4=\{a_iu:a_i\in X_1\cup Y_1\}.
$$
Note that $\{u,w\}$ is complete to $X_1\cup Y_1$ , while $X_1$ is anticomplete to $Y_2$ and $Y_1$ is anticomplete to $X_2$.

\begin{itemize}
\item Odd order, $|V(G_4)|=2t+3$.
\end{itemize}
With the same notation as in Construction~B, take $V(G_4)=X_1\cup X_2\cup Y_1\cup Y_2\cup\{u,w,w'\}$.
Define
$$
E_1=\{a_ib_j:a_i\in X_1,\ b_j\in X_2\},\qquad
E_2=\{a_ib_j:a_i\in Y_1,\ b_j\in Y_2\},
$$
$$
E_3=\{a_iw:a_i\in X_1\},\qquad
E_4=\{a_iw':a_i\in Y_1\},\qquad
E_5=\{a_iu:a_i\in X_1\cup Y_1\},
$$
and set $E(G_4)=E_1\cup E_2\cup E_3\cup E_4\cup E_5\cup\{ww'\}$.

\begin{figure}[h]
\centering
\includegraphics[width=0.7\textwidth]{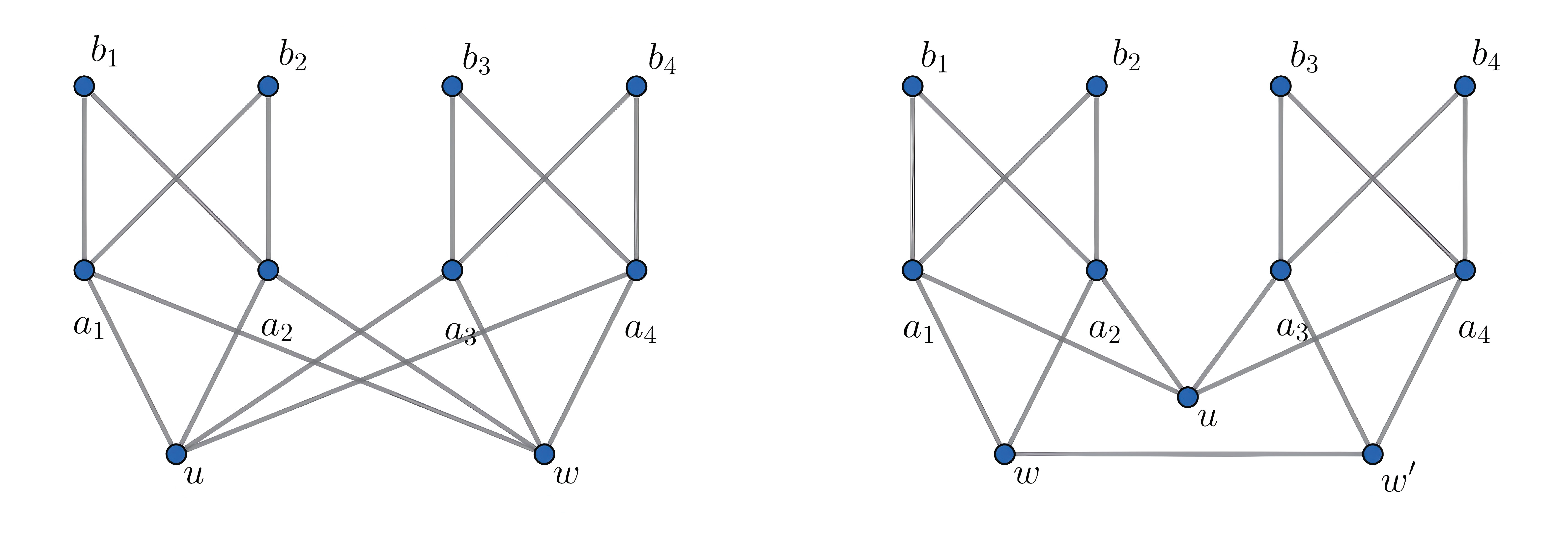}
\caption{The graphs $G_3$ and $G_4$ ($t=4$, $s=2$)}
\end{figure}

The properties of Construction~B are below.

\begin{claim}\label{consB}
The graphs $G_3$ and $G_4$ are connected and
induced-$(2P_2+P_1)$ free. Moreover, in each of these graphs, the
vertex $u$ is a top vertex but not a Gallai vertex.
\end{claim}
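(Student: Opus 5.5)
The plan is to verify the four properties (connectivity, induced-$(2P_2+P_1)$ freeness, $u$ top, $u$ not Gallai) directly from the edge sets, separately for $G_3$ and $G_4$. Set $A=X_1\cup Y_1$ (the $a_i$'s, $|A|=t$) and $B=X_2\cup Y_2$ (the $b_i$'s). Connectivity is immediate: $u$ is adjacent to all of $A$, and every other vertex has a neighbour in $A$.

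\emph{Degrees.} In $G_3$ we have $\deg(u)=\deg(w)=t$. A vertex of $X_1$ has degree $s+2\le t$ and a vertex of $Y_1$ has degree $t-s+2\le t$, using $2\le s\le t-2$. Vertices of $X_2$ and $Y_2$ have degrees $s$ and $t-s$. In $G_4$ the degrees of $A$ and $B$ are the same as in $G_3$, while $\deg(w)=s+1$ and $\deg(w')=t-s+1$. So in both graphs $\Delta=t=\deg(u)$, and $u$ is a top vertex.

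\emph{Longest paths.} $G_3$ is bipartite with parts $A$ and $B\cup\{u,w\}$, and $|A|=t$, so every path has order at most $2t+1$. The path
$$b_1a_1b_2a_2\cdots b_sa_s\,w\,a_{s+1}b_{s+1}\cdots a_tb_t$$
has order $2t+1$ and avoids $u$. For $G_4$, I use a toughness-type count. The components of $G_4-A$ are the $t$ singletons of $B$, the singleton $\{u\}$, and the edge $ww'$. A path $P$ meets $A$ in at most $t$ vertices, so $P-A$ has at most $t+1$ segments. Each segment lies in one component of $G_4-A$, so at most one segment has two vertices and the rest have one. Hence $|P|\le t+(t+1)+1=2t+2$. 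The path
$$b_1a_1\cdots b_sa_s\,w\,w'\,a_{s+1}b_{s+1}\cdots a_tb_t$$
has order $2t+2$ and avoids $u$, so $u$ is not a Gallai vertex.

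\emph{Freeness: approach.} This is the main step. I first classify all induced copies of $2P_2$, and then show that each one dominates the remaining vertices. The key observation is that every edge except $ww'$ has an endpoint in $A$. Also $u$ is complete to $A$, and $X_1\leftrightarrow X_2$, $Y_1\leftrightarrow Y_2$.

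\emph{Freeness in $G_3$.} An edge at $u$ or $w$ cannot lie in an induced $2P_2$, because the other edge contains some $a\in A$ and $u,w$ are complete to $A$. Two edges inside the same complete bipartite block are joined by a cross edge. So an induced $2P_2$ is $\{a b,\ a'b'\}$ with $a\in X_1,b\in X_2$ and $a'\in Y_1,b'\in Y_2$. Any fifth vertex is adjacent to one of these four: $u,w$ are adjacent to $a$; another vertex of $X_1$ is adjacent to $b$; another vertex of $Y_1$ is adjacent to $b'$; a vertex of $X_2$ is adjacent to $a$; a vertex of $Y_2$ is adjacent to $a'$. So no $2P_2+P_1$ is induced.

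\emph{Freeness in $G_4$.} The edge $ww'$ is excluded, since the other edge contains some $a$, which is adjacent to $w$ or $w'$. Edges at $u$ are excluded as in $G_3$. For an edge $wa$ with $a\in X_1$, check the possible partners $a'z$:
\begin{itemize}
\item $a'\in X_1$ is adjacent to $w$;
\item $a'w'$ is blocked by the edge $ww'$;
\item $a'u$ is blocked by $ua$;
\item the only survivor is $a'b'$ with $a'\in Y_1$, $b'\in Y_2$.
\end{itemize}
Symmetrically, an edge $w'a'$ with $a'\in Y_1$ pairs only with $ab$, where $a\in X_1$, $b\in X_2$. Two block edges pair only as in $G_3$. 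So the induced $2P_2$'s are $\{ab,a'b'\}$, $\{wa,a'b'\}$ and $\{ab,w'a'\}$, with $a\in X_1$, $b\in X_2$, $a'\in Y_1$, $b'\in Y_2$. For each type, every remaining vertex is adjacent to it:
\begin{itemize}
\item $u$ is adjacent to $a$ and $a'$;
\item $w$ is adjacent to $a$ (or to $w'$); $w'$ is adjacent to $a'$ (or to $w$);
\item vertices of $X_1$ are adjacent to $w$ or to $b$; vertices of $Y_1$ are adjacent to $w'$ or to $b'$;
\item vertices of $X_2$ are adjacent to $a$, and vertices of $Y_2$ are adjacent to $a'$.
\end{itemize}
This finishes the freeness check.

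The main obstacle is the exhaustive edge-pair classification for $G_4$. There, unlike $G_3$, the edges $wa$ and $w'a'$ do create new induced $2P_2$'s, so one must check carefully that these are still dominated by the rest of the graph.
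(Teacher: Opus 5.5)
Your proof is correct and follows essentially the same route as the paper. You use the same classification of induced $2P_2$'s: one edge from each complete bipartite block in $G_3$, plus the $\{wX_1,\,Y_1Y_2\}$ and $\{X_1X_2,\,w'Y_1\}$ types in $G_4$, followed by a check that every remaining vertex is dominated. Your path-length bound counts components of $G-(X_1\cup Y_1)$ (or uses bipartiteness for $G_3$), which is the paper's non-1-toughness argument applied directly to paths. It is slightly more explicit than the paper, since you verify the degrees and the upper bound on path length rather than just asserting them.
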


\begin{proof}[Proof of Claim~\ref{consB}]
By construction, $G_3$ and $G_4$ are connected. Note that for $i=3,4$, $\Delta(G_i)=t=deg_{G_i}(u)$, and
hence $u$ is a top vertex of $G_i$. 

We assert that $G_i$ is nontraceable, for $i=3,4$. Let $\widetilde{G_i}$ be a graph obtained from $G_i$ by adding a new vertex $v$ joining all vertices to $G_i$. Clearly, $\widetilde{G_i}-X_1\cup Y_1\cup\{v\}$  contains $t+2$ components. This implies that $\widetilde{G_i}$ is not 1-tough. Thus, $\widetilde{G_i}$ is nonhamiltonian and hence $G_i$ is nontraceable.

Since $Q=b_{s+1}a_{s+1}b_{s+2}a_{s+2}\cdots b_ta_tw
a_1b_1\cdots a_sb_s$
is a longest path of order $2t+1$ avoiding $u$ in $G_3$, $u$ is not a Gallai vertex of $G_3$. Since $Q'=b_{s+1}a_{s+1}b_{s+2}a_{s+2}\cdots b_ta_tw'wa_1b_1\cdots a_sb_s$
is a longest path of order $2t+2$  avoiding $u$ in $G_4$, $u$ is not a
Gallai vertex of $G_4$.

We next verify that $G_3$ is induced-$(2P_2+P_1)$ free. If four
vertices induce a copy of $2P_2$, then one edge must join $X_1$ to
$X_2$ and the other must join $Y_1$ to $Y_2$. Indeed, two edges
within the same complete bipartite subgraph have an additional edge
between their endpoints, and an edge incident with $u$ or $w$ cannot
form an induced $2P_2$ with another edge. Every remaining vertex is
adjacent to at least one endpoint of these two edges. Hence no vertex
can be isolated from an induced $2P_2$, and $G_3$ is
induced-$(2P_2+P_1)$ free.

Finally, we verify that $G_4$ is induced-$(2P_2+P_1)$ free. The edge
$ww'$ cannot belong to an induced $2P_2$, since every other edge has
an endpoint adjacent to $w$ or $w'$. A direct observation shows that
every induced $2P_2$ not containing $ww'$ is of one of the following
types: one edge between $X_1$ and $X_2$ together with one edge between
$Y_1$ and $Y_2$; one edge between $X_1$ and $X_2$ together with an
edge joining $w'$ to $Y_1$; or one edge joining $w$ to $X_1$ together
with an edge between $Y_1$ and $Y_2$. In each case, every remaining
vertex is adjacent to at least one endpoint of the induced $2P_2$.
Therefore, no isolated vertex can be added to it, and $G_4$ is
induced-$(2P_2+P_1)$ free.
\end{proof}

\medskip
\noindent\textbf{Construction C.}
For every integer $n\ge 10$, define a graph $G_5$ of order $n$ as follows. Let
$V(G_5)=\{u,v,a_1,a_2,b_1,b_2\}\cup C$, where $C=\{c_1,c_2,\ldots,c_{n-6}\}$.
The edge set is
$$
E(G_5)=\{a_1a_2,b_1b_2,uv\}\cup\{va_i: i=1,2\}\cup\{vb_i: i=1,2\}\cup\{uc_j:1\le j\le n-6\}.
$$

\begin{figure}[h]
\centering
\includegraphics[width=0.45\textwidth]{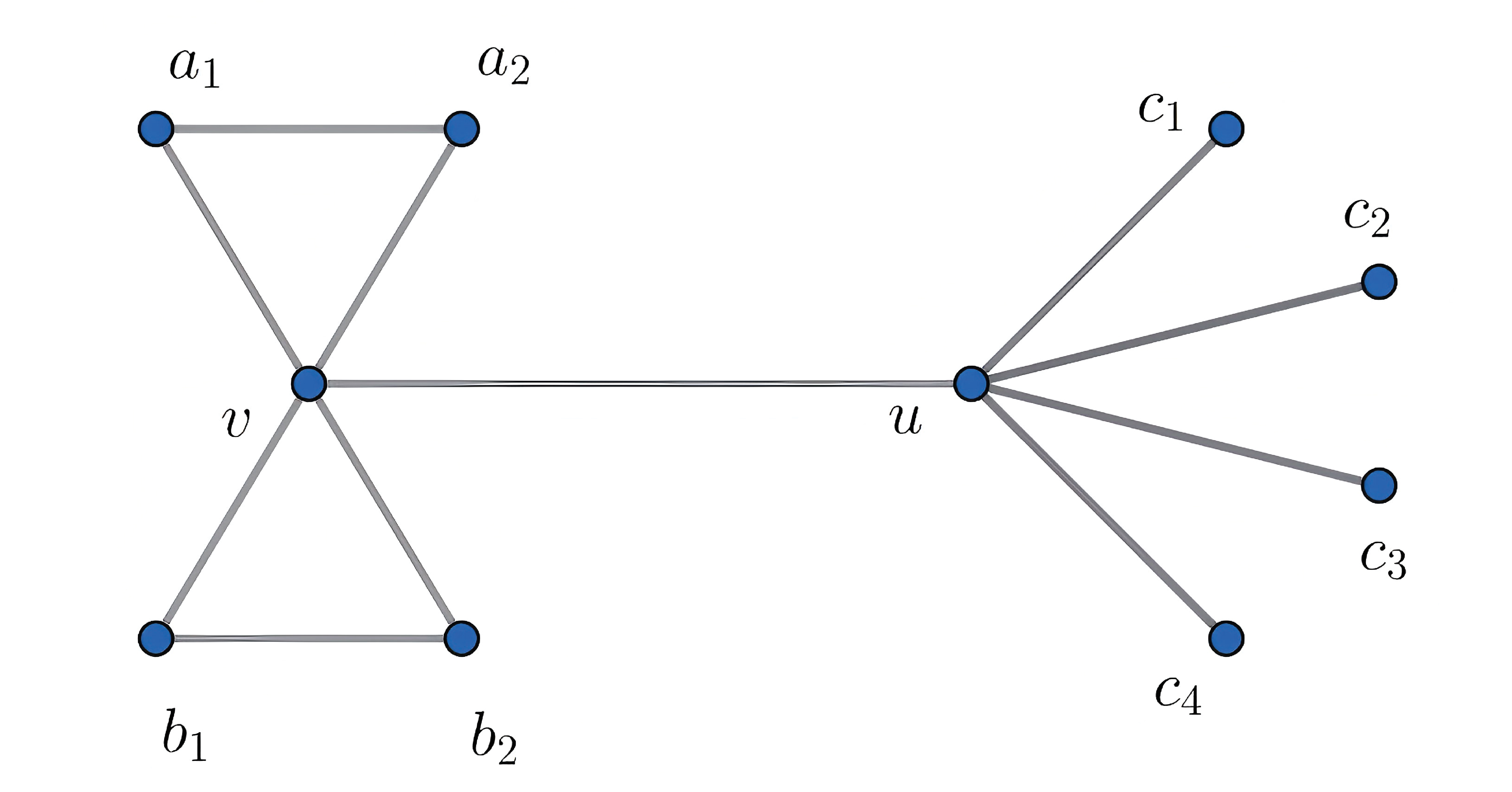}
\caption{The graph $G_5$ of order $n=10$}
\end{figure}

The properties of Construction~C are below.

\begin{claim}\label{consC}
For every integer $n\ge10$, the graph $G_5$ is a connected,
induced-$(P_4+P_1)$ free, induced-$P_5$ free graph of order $n$ containing a top vertex $u$
that is not a Gallai vertex.
\end{claim}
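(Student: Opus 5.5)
The plan is to verify each property of $G_5$ directly from its very simple structure. $G_5$ consists of two triangles $va_1a_2$ and $vb_1b_2$ sharing the vertex $v$, the edge $uv$, and a star at $u$ with the $n-6\ge 4$ leaves $c_1,\dots,c_{n-6}$.

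\emph{Connectivity and top vertex.} Connectivity is immediate, since every vertex is adjacent to $u$ or to $v$ and $uv\in E(G_5)$. For degrees, $deg(u)=n-5$, $deg(v)=5$, each $a_i,b_i$ has degree $2$, and each $c_j$ has degree $1$. The hypothesis $n\ge 10$ is used exactly here: it gives $n-5\ge 5$, so $u$ is a top vertex.

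\emph{Longest paths.} I would bound every path by $5$ vertices as follows. The vertex $v$ is a cut vertex separating the three branches $\{a_1,a_2\}$, $\{b_1,b_2\}$ and $\{u\}\cup C$. A path meets a branch not containing $v$ in a contiguous segment. It can visit at most two branches, one on each side of $v$, or lie inside a single branch.
\begin{itemize}
\item The branches $\{a_1,a_2\}$ and $\{b_1,b_2\}$ contribute at most $2$ vertices each.
\item In the branch $\{u\}\cup C$, a segment ending at $v$ must enter through $u$, and $C$ is independent, so it has at most $2$ vertices ($u$ and one $c_j$).
\item A path avoiding $v$ lies in one branch; in the star it has at most $3$ vertices.
\end{itemize}
Hence every path has order at most $5$. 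The path $a_1a_2vb_1b_2$ has order $5$ and avoids $u$, so $u$ is not a Gallai vertex.

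\emph{Induced-freeness: the key lemma.} The main step, and the only one needing care, is to show that every induced $P_4$ of $G_5$ has $u$ and $v$ as its two interior vertices.
\begin{itemize}
\item An interior vertex of an induced path has two nonadjacent neighbours on it. This rules out $a_i$ and $b_i$, whose two neighbours are adjacent, and rules out $c_j$, which has degree $1$. So both interior vertices lie in $\{u,v\}$, and they are adjacent, so they are exactly $u$ and $v$.
\item Such paths exist, for example $c_1uva_1$. The point is that no other induced $P_4$ exists.
\end{itemize}

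\emph{Induced-$(P_4+P_1)$ free.} Given the key lemma, every vertex outside an induced $P_4$ is adjacent to $u$ or $v$. So no vertex is anticomplete to it, and $G_5$ is induced-$(P_4+P_1)$ free.

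\emph{Induced-$P_5$ free.} Suppose $w_1w_2w_3w_4w_5$ were an induced $P_5$. Then $w_1w_2w_3w_4$ and $w_2w_3w_4w_5$ are both induced $P_4$'s. By the key lemma, $\{w_2,w_3\}=\{u,v\}$ and also $\{w_3,w_4\}=\{u,v\}$, which forces $w_2=w_4$, a contradiction.

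I expect the key lemma to be the main point. The rest is bookkeeping, though the cut-vertex argument should be written carefully so that paths lying entirely inside the star at $u$ are also covered.
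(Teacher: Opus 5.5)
Your proof is correct and follows the paper's approach. Like the paper, you check the degrees directly, exhibit the order-$5$ path $a_1a_2vb_1b_2$ avoiding $u$, and show that every induced $P_4$ passes through both $u$ and $v$, so that every remaining vertex sees it. The paper obtains the $P_4$ structure from $G_5-v$ having no induced $P_4$ together with the two triangles at $v$; you obtain it from the interior-vertex criterion. You also supply the longest-path bound and the $P_5$ argument, which the paper only asserts.

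One wording fix: phrase the cut-vertex step as ``deleting $v$ leaves at most two segments, each inside one branch and each of order at most $2$''. Your current statement that a path meets each branch in one contiguous segment fails for paths such as $a_1va_2$, although your per-side bounds still give the correct maximum of $5$.
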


\begin{proof}[Proof of Claim~\ref{consC}]
By construction, $G_5$ is connected.  Since $n\ge10$, we have
$\Delta(G_5)=n-5=deg_G(u)$, and hence $u$ is a top vertex. Note that 
$$P=a_1a_2vb_1b_2$$
is a longest path of order five that avoids $u$ in $G_5$. 
Thus $u$ is not a Gallai vertex. 	 

It remains to verify the two forbidden induced subgraph conditions.
Since $G_5-v$ contains no induced $P_4$, every induced $P_4$ in
$G_5$ must contain $v$. Note that $v$ lies in two triangles $va_1a_2v$ and $vb_1b_2v$. The two triangles containing $v$ imply that
every induced $P_4$ contains vertices $u$ and $c_j$, where $c_j\in C$. Every vertex outside this path has a
neighbour on it. Indeed, each vertex of $C\setminus\{c_j\}$ is adjacent
to $u$, the other vertex in the triangle is adjacent
to $v$. Hence no vertex is anticomplete to an induced $P_4$, and
$G_5$ is induced-$(P_4+P_1)$ free.

A similar analysis also shows that no induced $P_4$ can be extended
to an induced $P_5$. Therefore, $G_5$ is induced-$P_5$
free.
\end{proof}

\medskip
\noindent
\textbf{Construction D.}
Long, Milans, and Munaro \cite{LMM} gave an example that shows that a top vertex of a connected induced-$5P_1$ free is not a Gallai vertex. However, their  example has order $3p+1$, where $p\ge 3$. For the completeness of the proof, we construct a new graph $G_6$ of order $n\ge 18$ as follows:

Let $s$ and $t$ be integers with $3\le s\le t$. Let $H_1$ be a complete graph of order $s$ and $H_i$ be a complete graph of order $t$, for each $i\in \{2,3,4\}$. Choose two vertices $v,w$ in $H_1$ and choose a vertex $u_i$ in each $H_i$. By adding four edges $u_2v$, $u_3w$, $u_4v$ and $u_4w$, we obtain a graph $G_6$. Hence, $|G_6|=n=3t+s$, where  $3\le s\le t$. 
\begin{figure}[h]
\centering
\includegraphics[width=0.5\textwidth]{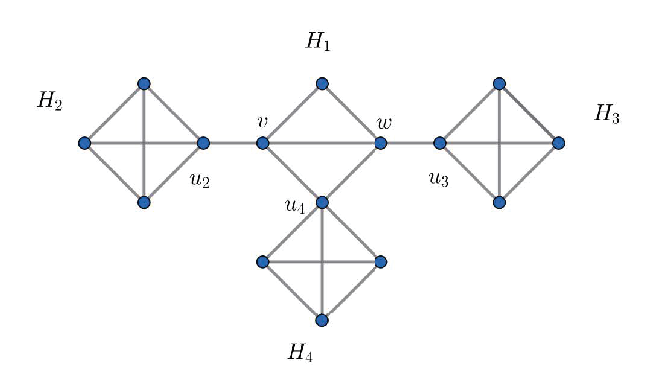}
\caption{The graph $G_6$ ($s=3$, $t=4$)}
\end{figure}

The properties of Construction~D are below.

\begin{claim}\label{consD}
Let $s$ and $t$ be  integers with $3\le s\le t$. $G_6$ is a connected
induced-$5P_1$ free graph of order $n$ containing a top vertex 
that is not a Gallai vertex.
\end{claim}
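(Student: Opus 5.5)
The plan is to check the four required properties of $G_6$ directly: connectivity, induced-$5P_1$ freeness, that $u_4$ is a top vertex, and that some longest path avoids $u_4$. Each follows from the block structure of $G_6$: four cliques $H_1,\dots,H_4$ joined only through the attachment vertices $v,w,u_2,u_3,u_4$.

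\textbf{Connectivity and $5P_1$-freeness.} Connectivity is immediate, since each $H_i$ with $i\ge 2$ is joined to $H_1$ by the edge $u_2v$, $u_3w$ or $u_4v$. For freeness, note that $V(G_6)$ is partitioned into the four cliques $V(H_1),\dots,V(H_4)$. An independent set meets each clique in at most one vertex, so $\alpha(G_6)\le 4$. Hence $G_6$ contains no induced $5P_1$.

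\textbf{Top vertex.} I would list the degrees. A vertex of $H_1$ other than $v,w$ has degree $s-1$, and $\deg(v)=\deg(w)=s+1$, since $v$ gains $u_2,u_4$ and $w$ gains $u_3,u_4$. A vertex of $H_i$ ($i\ge2$) other than $u_i$ has degree $t-1$. Finally $\deg(u_2)=\deg(u_3)=t$ and $\deg(u_4)=t+1$. Since $s\le t$, we get $\Delta(G_6)=t+1=\deg(u_4)$, so $u_4$ is a top vertex. If $s=t$, then $v$ and $w$ are top vertices as well, which is harmless.

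\textbf{$u_4$ is not a Gallai vertex.} This is the main step. For the upper bound on path length, observe that for each $i\in\{2,3,4\}$ every edge leaving $V(H_i)$ is incident with $u_i$. Suppose a path $P$ meets $V(H_i)$ and also some vertex outside it. The intersection of $P$ with $V(H_i)$ is a union of subpaths, and each subpath either contains an endpoint of $P$ or is entered and left through $u_i$. The vertex $u_i$ can be used only once, so either $P$ meets $V(H_i)$ in a single subpath that contains an endpoint of $P$, or $P$ lies entirely in $H_i$ (in which case $|P|\le t$). A path has only two endpoints, so it meets at most two of $H_2,H_3,H_4$. Therefore every path has at most $s+2t$ vertices.

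For the lower bound, I exhibit a path of exactly this order avoiding $u_4$. Start with a Hamilton path of $H_2$ ending at $u_2$. Step to $v$, then follow a Hamilton $(v,w)$-path of $H_1$; this exists because $H_1$ is complete and $s\ge 3$. Step from $w$ to $u_3$ and finish with a Hamilton path of $H_3$ starting at $u_3$. This path has $t+s+t=n-t$ vertices, so it is a longest path, and it misses $u_4$. Hence the top vertex $u_4$ is not a Gallai vertex.

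The only delicate point is the cut-vertex argument giving the bound $s+2t$; everything else is routine verification.
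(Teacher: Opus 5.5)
Your checks of connectivity, $\alpha(G_6)\le 4$, the degree list, and the path of order $2t+s$ avoiding $u_4$ are correct. The gap is in the step you flag as delicate, the bound $|P|\le s+2t$, which rests on a false claim. From ``$u_i$ can be used only once'' you conclude that either $P$ meets $V(H_i)$ in a single end segment or $P$ lies inside $H_i$. This overlooks a segment consisting of $u_i$ alone, entered and left through two \emph{different} neighbours of $u_i$ outside $H_i$. That cannot happen for $u_2$ or $u_3$, but $u_4$ has the two outside neighbours $v$ and $w$. So your conclusion that a path meets at most two of $H_2,H_3,H_4$ is wrong. For example, take a Hamilton path of $H_2$ ending at $u_2$, then $vu_4w$, then $u_3$ and a Hamilton path of $H_3$: this path meets all three and has order $2t+3$.

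A telling symptom is that your argument never really uses $s\ge 3$; a Hamilton $(v,w)$-path of $K_s$ already exists for $s=2$. Yet the claim fails for $s=2$. There the path just described has order $2t+3$, while every path avoiding $u_4$ has order at most $2t+2$, because $u_4$ separates $H_4-u_4$ from the rest. So $u_4$ lies on every longest path.

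The repair is a case split.
\begin{itemize}
\item If $P$ is disjoint from some $V(H_i)$ with $i\in\{2,3,4\}$, then $|P|\le n-t=s+2t$.
\item If $P$ meets all three, note that $u_2v$ and $u_3w$ are the only edges leaving $V(H_2)$ and $V(H_3)$. Hence $V(P)\cap V(H_2)$ and $V(P)\cap V(H_3)$ are end segments of $P$, attached via $u_2v$ and $wu_3$. So $H_4$ is met in an interior segment, which must be $\{u_4\}$ with $vu_4w$ a subpath of $P$.
\end{itemize}
In the second case $P$ is a path in $H_2$ ending at $u_2$, then $vu_4w$, then a path in $H_3$ starting at $u_3$. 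Thus $|P|\le 2t+3\le 2t+s$, and this is exactly where $s\ge 3$ is needed. The paper itself only asserts the bound $2t+s$ from the clique structure, so this case is the real content of that step.
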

\begin{proof}[Proof of Claim~\ref{consD}]
By the definition of $G_6$, $G_6$ is connected. Since $3\le s\le t$, it follows that $\Delta(G_6)=deg_{G_6}(u_4)=t+1$, that is, $u_4$ is a top vertex of $G_6$. Since for each $j\in\{1,2,3,4\}$, each $V(H_j)$ is a clique,  every path of $G_6$ has order at most $2t+s.$ In addition, there exists a longest path $P$ containing all vertices of $H_1$, $H_2$ and $H_3$, that is, $P$ is a longest path avoiding vertex $u_4$. Hence, $u_4$ is a top vertex but not a Gallai vertex. 

By the definition of $G_6$,  it follows that $G_6$ has independence number four. This implies that $G_6$ is induced-$5P_1$ free.
\end{proof}

The preceding results cover all linear forests of order five. Next we record a simple observation concerning
linear forests of order at least six.

\begin{lemma}\label{remains}
Every linear forest of order at least six contains, as an induced
subgraph, at least one of the following linear forests:
$$
P_5,\quad
P_4+P_1,\quad
P_3+P_2,\quad
2P_2+P_1,\quad
P_2+3P_1,\quad
5P_1.
$$
\end{lemma}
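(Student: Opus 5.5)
The approach is to reduce to linear forests of order exactly five. Every linear forest $F$ of order $n\ge 6$ has an induced linear forest of order five, because deleting an endpoint of any path component of $F$ (an isolated vertex is also such an endpoint) gives an induced subgraph of $F$ that is again a linear forest, now of order $n-1$. Repeating this $n-5$ times leaves an induced linear forest of order exactly five. Since induced containment is transitive, it then suffices to show that every linear forest of order five is one of the six listed graphs.

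The second step is to list the linear forests of order five. Such a forest is determined up to isomorphism by the multiset of its path orders, which is a partition of $5$. There are seven partitions:
$$5,\quad 4+1,\quad 3+2,\quad 3+1+1,\quad 2+2+1,\quad 2+1+1+1,\quad 1+1+1+1+1.$$
These correspond to $P_5$, $P_4+P_1$, $P_3+P_2$, $P_3+2P_1$, $2P_2+P_1$, $P_2+3P_1$ and $5P_1$. All of them appear in the list except $P_3+2P_1$.

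The only real obstacle is therefore $P_3+2P_1$, which is deliberately missing from the list. I handle it by changing the deletion rule in the first step. If $F$ has order at least six, I delete a vertex chosen so that the resulting order-five forest is not $P_3+2P_1$. Suppose $F$ has a path component with at least four vertices. Then $F$ contains $P_4+P_1$ or $P_5$ as an induced subgraph. If that component has order at least five, take $P_5$. If it has order exactly four, some other vertex exists, and taking it gives $P_4+P_1$. Otherwise every component has at most three vertices. If $F$ has a $P_3$ component, the remaining at least three vertices either contain an edge, which gives $P_3+P_2$, or are isolated vertices, which gives $P_3+3P_1$ and hence the induced $P_2+3P_1$ obtained by deleting an end of the $P_3$. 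If every component has at most two vertices, the forest contains $2P_2+P_1$, $P_2+3P_1$ or $5P_1$ according to how many $P_2$ components it has (at least two, one, or none). In each case $F$ has at least six vertices, so enough other vertices remain to complete the chosen subgraph.

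Because this case analysis on the largest component and the number of edges is routine, it is short, and it finishes the proof.
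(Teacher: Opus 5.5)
Your proof is correct. The case split on the largest component covers every linear forest of order at least six: at least four vertices gives $P_5$ or $P_4+P_1$, a $P_3$ component gives $P_3+P_2$ or $P_2+3P_1$, and components of order at most two give $2P_2+P_1$, $P_2+3P_1$ or $5P_1$. You also correctly saw that naive endpoint deletion can land on $P_3+2P_1$, for example from $2P_3$ or $P_3+3P_1$.

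The paper records this lemma as a simple observation and gives no proof, so your case analysis supplies exactly the routine verification it leaves implicit. Your opening reduction-to-order-five paragraph is not actually used, since the final case analysis is self-contained.
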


We are now ready to prove the main theorem.

\begin{proof}[\bf Proof of Theorem~\ref{main}]
We prove the two directions separately.

Suppose first that $H$ is a linear forest of order at most four
or
$H=P_3+2P_1.$
If $H$ is a linear forest of order at most four, the conclusion follows directly from
Theorem  \ref{lmm2}. 
If $H=P_3+2P_1,$
the conclusion follows from Proposition~\ref{P_3+2P_1}. This proves
the sufficient condition.

Conversely, we prove the necessary condition. By Theorem \ref{lmm1}, $H$ must be a linear forest of order at most nine. If $|H|\le 4$, we are done, so assume that $|H|\ge5$
and $H\ne P_3+2P_1.$ 
We show that there exists a connected induced-$H$ free graph
containing a top vertex that is not a Gallai vertex.

We first consider the case
$|H|=5.$ 
Since $H$ is a linear forest of order five and
$H\ne P_3+2P_1$, it is isomorphic to one of	$\{P_5, P_4+P_1, P_3+P_2, 2P_2+P_1, P_2+3P_1, 5P_1\}.$

For $H=P_2+3P_1$, Proposition~\ref{P_2+3P_1} shows that every graph
in $\mathcal{G}$ is a connected induced-$(P_2+3P_1)$ free graph
containing a top vertex that is not a Gallai vertex.

For the remaining five linear forests, the constructions  provide the required counterexamples. Therefore, the desired
property fails for every linear forest of order five other than
$P_3+2P_1$.

It remains to consider the case
$6\le|H|\le9.$
By Lemma~\ref{remains}, the graph $H$ contains,
as an induced subgraph, a linear forest $F$ such that $F\in
\{P_5,P_4+P_1,P_3+P_2,2P_2+P_1,P_2+3P_1,5P_1\}.$
By the preceding arguments, there exists a connected induced-$F$
free graph $G$ containing a top vertex that is not a Gallai vertex.
This completes the proof of Theorem~\ref{main}. 
\end{proof}

\section*{\normalsize Acknowledgement}  

The author thanks Professor Xingzhi Zhan for his constant support and guidance.

\end{document}